\newcommand{\C}{\mathcal{C}}
\newcommand{\R}{\mathbb{R}}
\newcommand{\N}{\mathbb{N}}
\newcommand{\Z}{\mathbb{Z}}
\newcommand{\ve}{\varepsilon}
\newcommand{\vp}{\varphi}
\newtheorem{Theorem}{Theorem}
\newtheorem{Proposition}[Theorem]{Proposition}
\newtheorem{Lemma}[Theorem]{Lemma}
\newtheorem{Corollary}[Theorem]{Corollary}
\theoremstyle{remark}
\newtheorem{remark}[Theorem]{Remark}
\theoremstyle{definition}
\newtheorem{Definition}[Theorem]{Definition}
\title{
Singular solutions for a class of traveling wave equations arising
in hydrodynamics \footnote{{\bf Acknowledgements.} The first author
is  supported by the FWF project J3452 ``Dynamical Systems Methods
in Hydrodynamics'' of the Austrian Science Fund. The second author
is partially supported by Ministry of Economy and Competitiveness of
the Spanish Government through grant DPI2011-25822 and grant
2014-SGR-859 from AGAUR, Generalitat de Catalunya.}}
\author{Anna Geyer$^{(1)}$and V\'{\i}ctor Ma\~{n}osa $^{(2)}$
  \\*[.1truecm]
{\small \textsl{$^{(1)}$ Dept. de Matem\`{a}tiques, Facultat de
Ci\`{e}ncies,}}
\\ {\small \textsl{Universitat Aut\`{o}noma de Barcelona,}}
\\ {\small \textsl{08193 Bellaterra, Barcelona, Spain}}
\\ {\small \textsl{annageyer@mat.uab.cat}}\\
\\ {\small \textsl{$^{(2)}$ Dept. de Matem\`{a}tica Aplicada III,}}
\\ {\small \textsl{Control, Dynamics and Applications Group (CoDALab)}}
\\ {\small \textsl{Universitat Polit\`{e}cnica de Catalunya}}
\\ {\small \textsl{Colom 1, 08222 Terrassa, Spain}}
\\ {\small \textsl{victor.manosa@upc.edu}}}
\begin{document}

\maketitle
\begin{abstract}

We give an exhaustive characterization of singular weak solutions for ordinary
differential equations of the form $\ddot{u}\,u +
\frac{1}{2}\dot{u}^2 + F'(u) =0$, where $F$ is an analytic function.
Our motivation stems from the fact that  in the context of hydrodynamics several
prominent equations are reducible to an equation of this form
upon passing to a moving frame. We construct peaked and cusped waves,
fronts with finite-time decay and compact solitary waves. We prove
that one cannot obtain peaked and compactly supported traveling waves for the
same equation. In particular, a peaked traveling wave cannot have compact
support and vice versa. To exemplify the approach we apply our
results to the Camassa-Holm equation and the equation for surface waves
of moderate amplitude, and show how the different types of singular solutions
can be obtained varying the energy level of the corresponding planar Hamiltonian systems.
\end{abstract}

\noindent {\sl  Mathematics Subject Classification 2010:} 35Q51,
37C29, 35Q35, 76B15, 37N10.

\noindent {\sl Keywords:} Camassa-Holm equation, integrable vector fields,
singular ordinary differential equations, traveling waves.

\section{Introduction}\label{S_intro}

In the present paper we propose to study certain types of weak solutions for ordinary differential equations (ODE) of the form
\begin{equation}\label{E_TWeq}
 \ddot{u}\,u + \frac{1}{2}\dot{u}^2 + F'(u) =0,
\end{equation}
where $F$ is an analytic function. Our motivation stems from the
fact that a variety of model equations arising in the context of
hydrodynamics, among them the well-known Camassa--Holm equation
(cf.~\cite{Camassa1993,Camassa94,Fuchssteiner1981}) and the related equation for surface waves of
moderate amplitude (cf.~\cite{ConLan09,DurukMutlubas2013a,DurukMutlubas2013b,DGM14,Gasull2014}), are reducible to an ODE of the
form \eqref{E_TWeq} upon passing to a moving frame. Owing to the fact that every
solution of equation \eqref{E_TWeq} may be interpreted as a
traveling wave of a suitable underlying partial differential equation (PDE) we will call the solutions of \eqref{E_TWeq}
traveling waves.

The singular nature of Equation \eqref{E_TWeq} accounts for the
non-uniqueness of certain solutions, which we call \emph{singular solutions}.
These are in general weak solutions, but have stronger regularity
than one would expect a priori: the solutions are analytic except
for a countable number of points at which the equation is satisfied
in the limit. Furthermore, equation \eqref{E_TWeq} admits an order
reduction which allows us to see that under certain conditions on
$F$, the solutions are actually classical solutions of this reduced
equation.

The main result of this paper consists in the exhaustive
characterization of singular solutions of \eqref{E_TWeq} from
qualitative properties of the function $F(u)$. We show  that
equation \eqref{E_TWeq} admits solutions with peaks and cusps,
fronts  with finite-time decay and solitary solutions with compact support. Furthermore, we
find that one cannot obtain peaked and compactly supported solutions
for the same $F$. In particular, a peaked solution cannot have
compact support and vice versa. The characterization of classical
solutions of \eqref{E_TWeq} will be covered only very briefly for
the convenience of the reader, since our main focus lies in the
analysis of singular solutions.

We apply our results to the aforementioned nonlinear partial
differential equations, and show how the different types of singular
solutions are obtained varying the energy levels of the Hamiltonian
planar differential system corresponding to \eqref{E_TWeq}. It lies
beyond the scope of this paper to prove  in full generality that
every weak solution of \eqref{E_TWeq} is also a weak traveling wave
solution of an underlying PDE. For a discussion of this problem  we
refer the reader to  \cite{Ehrnstrom2009a} and \cite{Lenells2005a},
where it is shown that in the special case of the Camassa-Holm
equation every weak solution of \eqref{E_TWeq} is a weak traveling
wave solution of the underlying PDE. Following similar steps the
same result can be shown for the equation of surface waves of
moderate amplitude.

The structure of the paper is as follows. In
Section~\ref{S_prelimdef}, we give  the precise definitions of weak
and singular solutions and provide a preliminary result on the
non-uniqueness of solutions of \eqref{E_TWeq}. In Section \ref{S_TW}
we introduce the notion of \emph{elementary forms}, classical
solutions of \eqref{E_TWeq} defined on a subset of $\R$, from which
we construct singular solutions. Furthermore, we discuss how the
qualitative features of any traveling wave solution can be obtained
from the properties of $F$. The main results of the paper,
Propositions \ref{P_peaked}, \ref{P_cpsupp}, \ref{P_c1} and
\ref{P_cusped} are presented in Section \ref{S_STW} which is devoted
to the complete characterization of singular solutions. In Section
\ref{S_bif_approach}, we characterize the classical and singular
traveling waves of the Camassa-Holm equation and the equation for
surface waves of moderate amplitude in shallow water.

\section{Weak and  singular solutions}
\label{S_prelimdef}

Our focus lies in the
characterization of solutions which are not classical, so we require a weak
formulation of \eqref{E_TWeq}. Keeping in mind that any solution of
\eqref{E_TWeq} can be interpreted as a traveling wave of an underlying
PDE, we will consider only bounded solutions.
\begin{Definition}\label{D_TWS} We say that a bounded function $u\in H^1_{loc}(\R)$ is a \emph{traveling wave solution (TWS)} if it satisfies \eqref{E_TWeq} in the sense of distributions, i.e.~if $u$ satisfies
\begin{equation}\label{E_weak}
  \int_{\R} (u^2)_t \phi_t + (u_t)^2 \phi - 2F'(u) \phi \, dt = 0,
\end{equation}
for any test function $\phi \in \C^{\infty}_c(\R)$. We say that $u$
is a \emph{strong TWS} if it satisfies \eqref{E_TWeq} in the
classical sense.
\end{Definition}

It turns out that the concept of weak solutions is quite crude. Indeed, if
no further conditions are imposed it is possible to find a plethora
of weak solutions of \eqref{E_TWeq} giving rise to TWS with very
complex shapes. For instance it is known that the Camassa--Holm
equation can have TWS of the form $u=\vp(t)$ such that some of its
level sets $\{\vp(t)=k\}$ are cantor sets, cf.~\cite{Lenells2005a}. In the
present paper, however, we are interested in those solutions  which fail
to be strong TWS because of the singularity, but
which still have a certain degree of regularity, and in fact are
strong solutions of \eqref{E_TWeq} except for a finite or a
countable set of points (such solutions are, for instance,
peaked or cusped waves, fronts with finite time decay, compact solitary waves and
composite waves, see Section \ref{S_STW} for precise definitions). To
see this, observe that equation \eqref{E_TWeq} admits the order
 reduction
\begin{equation*}
 \frac{d}{dt}\Big(\frac{u \dot{ u}^2}{2} + F(u) \Big) = \dot u \,\Big(  \ddot{u}\,u + \frac{1}{2}\dot{u}^2 + F'(u)\Big),
\end{equation*}
which is equivalent to the fact that the planar differential system associated to
\eqref{E_TWeq} has a first integral.
Therefore, any classical solution of  \eqref{E_TWeq} naturally satisfies
\begin{equation}
 \label{E_singTW}
    \frac{u \dot u^2}{2} + F(u)=h, \, \mbox{ for some constant  } h\in\R.
\end{equation}
The singularity of \eqref{E_singTW} leads to the existence of non
constant solutions $u\in H^1_{loc}(\R)$ of \eqref{E_weak} which
satisfy \eqref{E_singTW} except, perhaps, at a countable number of
points where the derivative is not defined but the equation is
still satisfied in the limit. This motivates the
following definition:

\begin{Definition}\label{D_singularTWS} Let $u(t)\in H^1_{loc}(\R)$ be
a non constant TWS of  \eqref{E_TWeq}. Then $u(t)$ is called a
\begin{enumerate}[(a)]
\item  \emph{strong singular} TWS of
\eqref{E_TWeq} if $u$ is a  classical solution of  \eqref{E_singTW}  on $\R$.

\item  \emph{weak singular} TWS  of \eqref{E_TWeq} if $u$ is a classical solution of \eqref{E_singTW} on  $\R\setminus \mathcal{S}$, where $\mathcal{S}$ is the set of countably many points $t_k$  such that
\begin{equation}\label{E_weaksingular}
   \lim_{t\rightarrow t_k} \frac{u(t) \dot u(t)^2}{2} + F(u(t))=h.
\end{equation}
\end{enumerate}
\end{Definition}

For the remainder of this paper we set $h_0:=F(0)$. The next result establishes the non-uniqueness of
solutions of \eqref{E_singTW} for $h=h_0$. This will play a role in
the construction of singular TWS of \eqref{E_TWeq}, see Section \ref{S_STW}.

\begin{Lemma}\label{L_uniq2}
Consider equation \eqref{E_singTW} with $F$ an analytic function
such that $u\left(h-F(u)\right)>0$ for $
u\in\mathcal{U}\setminus\{0\}$ where $\mathcal{U}$ is a neighborhood
of $0$.
\begin{enumerate}[(a)]
  \item If $h\neq h_0$, then  equation \eqref{E_singTW} is not defined at
  $u=0$.
  \item If $h=h_0$ then  $u\equiv 0$ is a solution of equation
  \eqref{E_singTW}. Furthermore if $F'(0)\neq 0$ or $F'(0)=0$ and
  $F''(0)\neq 0$ then equation  \eqref{E_singTW} is not Lipschitz continuous in $u=0$.
\end{enumerate}
\end{Lemma}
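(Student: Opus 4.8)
The plan is to solve \eqref{E_singTW} for $\dot u$ and analyze the resulting first-order autonomous ODE near $u=0$. Writing \eqref{E_singTW} as $\dot u^2 = \tfrac{2(h-F(u))}{u}$, the hypothesis $u(h-F(u))>0$ for $u\in\mathcal{U}\setminus\{0\}$ guarantees the right-hand side is positive there, so $\dot u = \pm\sqrt{2(h-F(u))/u}=:\pm G(u)$ is a genuine (real) ODE on $\mathcal{U}\setminus\{0\}$. For part (a), I would simply observe that $h\neq h_0=F(0)$ forces the numerator $h-F(u)$ to tend to the nonzero limit $h-h_0$ as $u\to 0$, while the denominator $u\to 0$; hence $\dot u^2\to\pm\infty$ and the expression $\tfrac{u\dot u^2}{2}+F(u)$ cannot be evaluated at $u=0$ — the equation is not defined there. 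That is essentially immediate once the equation is put in solved form.

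For part (b), when $h=h_0$ the identity \eqref{E_singTW} reads $\tfrac{u\dot u^2}{2}=h_0-F(u)$, and substituting $u\equiv 0$ gives $0=h_0-F(0)=0$, so the constant function $u\equiv 0$ is a solution. For the failure of the Lipschitz property I would examine the behavior of $G(u)=\sqrt{2(h_0-F(u))/u}$ as $u\to 0$ and show $G(u)\to\infty$ (or at least that $G$ is not Lipschitz at $0$), which by the standard fact that a locally Lipschitz right-hand side yields uniqueness would contradict the coexistence of $u\equiv 0$ with nonconstant solutions through points near $0$. Using analyticity of $F$, write $F(u)=h_0+F'(0)u+\tfrac12 F''(0)u^2+O(u^3)$, so that $h_0-F(u)=-F'(0)u-\tfrac12F''(0)u^2+O(u^3)$ and therefore
\[
  \frac{2(h_0-F(u))}{u} = -2F'(0)-F''(0)u+O(u^2).
\]
If $F'(0)\neq 0$ then this tends to $-2F'(0)\neq 0$, so $G(u)$ tends to the finite nonzero limit $\sqrt{-2F'(0)}$ (the sign hypothesis forces $F'(0)<0$ on the relevant side); hence $G(0)\neq 0$ while $G$ vanishes at no point near $0$, and the ODE $\dot u=\pm G(u)$ has $\dot u$ bounded away from $0$, so a nonconstant solution can reach $u=0$ in finite "time" and be continued by $u\equiv 0$, violating uniqueness — equivalently, $G$ is not Lipschitz at $0$ because near $u=0$ one has, from $G(u)^2 = -2F'(0)+O(u)$, that $G$ is continuous but the function defining the vector field, when one instead considers the equivalent planar picture $\dot u = v$, $\,v\,\dot v$-type relation, is singular; I would phrase the non-Lipschitz claim directly in terms of the failure of uniqueness for \eqref{E_singTW} at $u=0$. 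In the remaining case $F'(0)=0$, $F''(0)\neq 0$, the leading term is $-F''(0)u$, so $\tfrac{2(h_0-F(u))}{u}\sim -F''(0)u$ and $G(u)\sim\sqrt{-F''(0)}\,\sqrt{|u|}$, which is the prototypical non-Lipschitz right-hand side at $0$ (the square-root nonlinearity), again destroying uniqueness.

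The main obstacle, and the point requiring care rather than new ideas, is making the non-Lipschitz statement precise and consistent with the sign hypothesis: one must check that in each subcase the quantity under the square root has the correct sign on the appropriate one-sided neighborhood of $0$ (the hypothesis $u(h_0-F(u))>0$ dictates whether we are looking at $u>0$ or $u<0$, and correspondingly whether $F'(0)<0$ or $F'(0)>0$), and then either exhibit two distinct solutions through a point with $u=0$ or directly estimate $|G(u)-G(0)|/|u|$ (resp.\ $|G(u)|/|u|$) and show it is unbounded as $u\to 0$. I would handle the two subcases in parallel using the Taylor expansion above, noting that the case distinction $F'(0)\neq 0$ versus $F'(0)=0,\,F''(0)\neq 0$ corresponds exactly to whether $\dot u$ stays bounded away from zero or vanishes like $\sqrt{|u|}$ at the equilibrium, both of which are classical sources of non-uniqueness; the constant solution $u\equiv 0$ together with an elementary-form solution limiting onto it then furnishes the explicit non-uniqueness used later in Section~\ref{S_STW}.
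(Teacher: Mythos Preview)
Your approach is essentially the paper's: rewrite \eqref{E_singTW} as $\dot u=\pm G(u)$ with $G(u)=\sqrt{2(h-F(u))/u}$, use the Taylor expansion of $F$ at $0$, and check the Lipschitz quotient. Part (a), the fact that $u\equiv 0$ solves \eqref{E_singTW} when $h=h_0$, and the subcase $F'(0)=0$, $F''(0)\neq 0$ are all handled correctly and in the same spirit as the paper.

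Your treatment of the subcase $F'(0)\neq 0$, however, is muddled. You observe $G(u)\to\sqrt{-2F'(0)}\neq 0$ and then drift into a non-uniqueness argument, eventually invoking the planar system, without actually establishing the non-Lipschitz claim. This is backwards: non-uniqueness is a \emph{consequence} of non-Lipschitz, not a proof of it, and the lemma asks for the latter. The clean argument (which you list only as a fallback) is the one the paper uses uniformly for both subcases: since $u\equiv 0$ must correspond to the value $G(0)=0$, compute
\[
\frac{|G(u)|}{|u|}=\sqrt{-2\left(\frac{F'(0)}{u^2}+\frac{F''(0)}{2u}+\frac{F^{(3)}(0)}{6}+o(u)\right)},
\]
which tends to $+\infty$ whenever $F'(0)\neq 0$, or $F'(0)=0$ and $F''(0)\neq 0$. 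This single computation disposes of both subcases at once; there is no need to separate them or to appeal to non-uniqueness.
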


\begin{proof}
 Notice that equation \eqref{E_singTW} may be written as
 \begin{equation*}
    \dot{u}=\pm\sqrt{2\,\frac{h-F(u)}{u}} =: v_h^{\pm}(u).
 \end{equation*}
Observe that if $h\neq h_0$ then $\lim\limits_{u\rightarrow 0}
v_h^{\pm}(u) =\pm \infty$, which proves statement (a). When $h=h_0$,
$u\equiv0$ is always a solution of \eqref{E_singTW}. Note that
$$
\frac{|v_{h_0}^{\pm}(u)
|}{|u|}=\sqrt{-2\left(\frac{F'(0)}{u^2}+\frac{F''(0)}{2!\,
u}+\frac{F^{(3)}(0)}{3!}+o(u)\right)}.
$$
Hence, if $F'(0)\neq 0$ or $F'(0)=0$ and $F''(0)\neq 0$ then
$\lim\limits_{u\rightarrow 0} |v_{h_0}^{\pm}(u)|/|u|=+\infty$.
Therefore it is not possible to find a constant $L>0$ such that
$|v_{h_0}^{\pm}(u)| \leq L |u|$ for $u \in \mathcal{U}$, and hence
the right-hand side of the differential equation fails to be
Lipschitz continuous in $u=0$. ~\end{proof}

\begin{remark}
Observe that $v_{h_0}^{\pm}(u)$ is Lipschitz continuous in $u=0$
only if $F'(0)=F''(0)=0$ and $F^{(3)}(0)\neq 0$. However, as a
consequence of Propositions \ref{P_peaked}, \ref{P_cpsupp} and
\ref{P_c1}, such $F$ do not yield any singular solutions, and
therefore they will not be considered in this paper.
\end{remark}

\section{Traveling wave solutions from qualitative properties of $F$}\label{S_TW}

The planar system associated to equation \eqref{E_TWeq} is given by
  \begin{equation}
  \label{E_TWSys}
       \left\{
      \begin{array}{l l }
    \dot u =v\vspace{0.8em}\\
    u\,\dot v = - F'(u) - \frac{1}{2}\,v^2.
      \end{array}\right.
  \end{equation}
A straightforward computation shows that system \eqref{E_TWSys}
possesses the first integral
  \begin{equation}
   \label{Hamiltonian}
     H(u,v) = \frac{u\,v^2}{2}+ F(u),
  \end{equation}
whose \emph{energy} $H(u,v)=h$ is therefore constant along
solutions. We will study how the TWS of  \eqref{E_TWeq} can be
obtained from the orbits of the associated system \eqref{E_TWSys},
and how these orbits depend on the qualitative properties
 of $F$. Our study resembles the study of
conservative systems of one degree of freedom \cite[$\S$12]{Arnold1992}. For
such systems, the qualitative features of the phase portrait are in
correspondence with the ones of the potential function. Analogously,
we find that the qualitative features of the function  $F$
characterize the phase portrait of \eqref{E_TWSys} and therefore the
types of TWS that  \eqref{E_TWeq} may exhibit.
In contrast to potential systems, the Hamiltonian \eqref{Hamiltonian}  has no purely kinetic term, which leads to the presence of a singular line  in the phase portrait and the existence of solutions defined on subsets of $\R$. Without loss of
generality we consider the system only in the half-plane
$(\{u>0\}\cup\{u=0\})\times \R$, which is not  restrictive since our
analysis is of a qualitative nature. It is worth mentioning however
that in some applications it might be interesting to consider
solutions in $\{u\leq 0\}$.

The solutions of equation \eqref{E_TWeq} are associated to the orbits of \eqref{E_TWSys}, which correspond to the level sets of the energy $H(u,v)=h$, i.e.~they lie on
 curves of the form $\gamma_h=\{H=h\}$ which are composed of two symmetric branches $(u,v_h^{\pm}(u))$, where
\begin{equation}
  \label{E_vh}
   v_h^{\pm}(u)  = \pm \sqrt{2\, \frac{h-F(u)}{u}}.
\end{equation}
Therefore, solutions of equation \eqref{E_TWeq} corresponding to
energy $h$ exist if there is a non-empty set $I\subseteq \{u\geq
0\}$ such that $F(u)-h<0$ for all $u\in I$. Recall that our interest
lies in the analysis of bounded solutions, which correspond to
branches of the curves $\gamma_h$ that are bounded in the
$u$-direction. These branches are defined if and only if either
there exist $m_1,m_2>0$ such that $F(m_1)=F(m_2)=h$ and $F(u)<h$ for
all $u\in(m_1,m_2)$, or there exists $m>0$ such that $F(m)=h$ and
$F(u)<h$ for all $u\in(0,m)$. The latter case
corresponds to a type of strong solutions of \eqref{E_TWeq} whose
maximal domain of definition is a (finite) subset of $\R$. We call these solutions  \emph{elementary forms}, which
 play an important role in our construction of singular TWS.
 The former case corresponds to
classical smooth solutions which are uniquely defined on $\R$.
Observe that possible oscillations of $F$ in $(m_1,m_2)$ or $(0,m)$ affect the solutions of \eqref{E_TWeq} only at the level of their convexity.

\subsection{Smooth TWS on $\mathbb{R}$}\label{S_SmoothTW}

In this subsection we briefly summarize how smooth TWS are characterized in terms of  qualitative
aspects of $F$.
Assume that there exist $m_1,m_2>0$ such that $F(m_1)=F(m_2)=h_m$
and $F(u)<h_m$ for all $u\in(m_1,m_2)$. Then the two branches of
$\gamma_{h_m}$ given by $(u,v_{h_m}^\pm(u))$ are defined for
$u\in[m_1,m_2]$ and coincide at the points $p_1=(m_1,0)$ and
$p_2=(m_2,0)$.

Observe that the critical points of system \eqref{E_TWSys} in
$\{u>0\}\times \R$ are  of the form $(u,0)$ where $F'(u)=0$. Hence a
point $p_i=(m_i,0)$ will be a critical point if $F'(m_i)=0$ and a
regular point otherwise. We distinguish between the following
cases (see Figure \ref{Fig_smooth}):

\begin{enumerate}
  \item[(A)] If $F'(m_i)\neq 0$ for $i=1,2$,
 then the points $p_1$ and $p_2$ are regular, and the two branches
$(u,v_{h_m}^\pm(u))$ give rise to an isolated closed curve, which is
a periodic orbit of \eqref{E_TWSys}. This periodic orbit corresponds
to a strong smooth periodic solution $u(t)$ of equation
\eqref{E_TWeq}. Observe that since  $v_{h_m}^+(u)$ and $v_{h_m}^-(u)$ are
not multivalued functions, the closed curve has no lobes and
therefore $u(t)$ reaches a unique local minimum at
$m_1$ and a unique local maximum at $m_2$ in each period. Furthermore, this periodic solution is symmetric with respect to its local minima and maxima.

  \item[(B)] If $F'(m_1)= 0$ and $F'(m_2)\neq 0$  then $p_1$ is a
  critical point and $p_2$ is a regular one. In this case, the branches
$(u,v_{h_m}^\pm(u))$ yield a homoclinic loop giving rise to  a
smooth solitary wave solution of \eqref{E_TWeq} which has a unique
maximum at $m_2$ and which decays exponentially on either side of the maximum such that $\lim_{t\rightarrow\pm
\infty}u(t)=m_1$ (recall that since $p_1$ is a critical point, the
time that the orbit of \eqref{E_TWSys} takes to leave from or to
reach this point is infinite). Furthermore, this solitary wave solution is symmetric with respect to its maximum. If $F'(m_1)\neq 0$ and $F'(m_2)=0$
then there appears a solitary wave of \eqref{E_TWeq} with a unique
minimum at $m_1$ and such that $\lim_{t\rightarrow\pm
\infty}u(t)=m_2$.

  \item[(C)] If $F'(m_1)= F'(m_2)=0$  then $p_1$ and
  $p_2$ are critical points, and the branches $(u,v_{h_m}^\pm(u))$  yield a heteroclinic loop connecting these points  in an
  infinite time. This pair of connecting orbits gives rise to a  smooth
  front decaying from $m_2$ to $m_1$ such that $\lim_{t\rightarrow-\infty}u(t)=m_2$ and
  $\lim_{t\rightarrow+\infty}u(t)=m_1$, and another smooth front increasing from  $m_1$ to $m_2$  such that $\lim_{t\rightarrow-\infty}u(t)=m_1$ and
  $\lim_{t\rightarrow+\infty}u(t)=m_2$.
\end{enumerate}

\begin{figure}
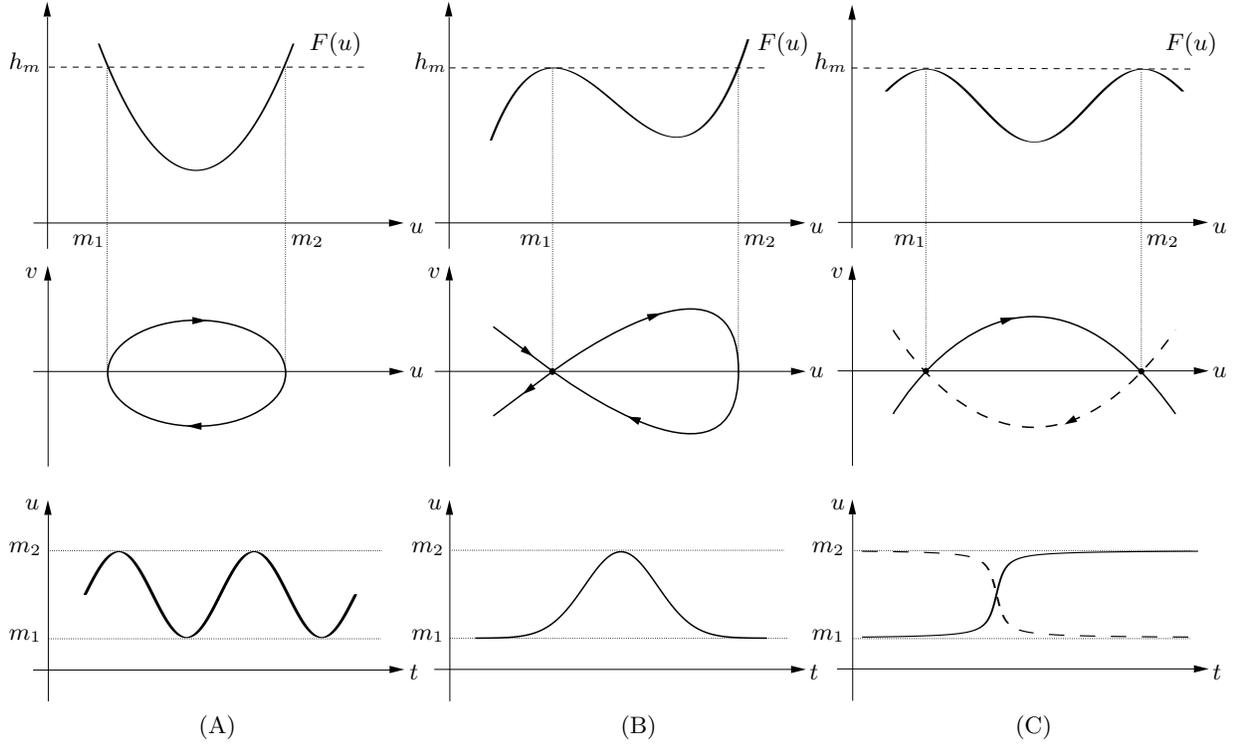

\footnotesize
  \centering
    \begin{lpic}[l(2mm),r(2mm),t(2mm),b(2mm)]{FigureAll1(0.35)}
      \lbl[l]{143,180;$u$}
      \lbl[l]{143,125;$u$}
      \lbl[l]{-3,163;$v$}
      \lbl[l]{143,11;$t$}
      \lbl[l]{105,250;$F(u)$}
      \lbl[c]{-3,243;$h_m$}

      \lbl[l]{15,175;$m_1$}
      \lbl[l]{98,175;$m_2$}

      \lbl[l]{-3,75;$u$}
      \lbl[c]{-3,58;$m_2$}
      \lbl[c]{-3,25;$m_1$}

      \lbl[c]{70,-10;(A)}

      \lbl[l]{295,180;$u$}
      \lbl[l]{295,125;$u$}
      \lbl[l]{150,163;$v$}
      \lbl[l]{295,11;$t$}
      \lbl[l]{275,250;$F(u)$}
      \lbl[c]{151,243;$h_m$}

      \lbl[l]{185,175;$m_1$}
      \lbl[l]{270,175;$m_2$}

      \lbl[l]{150,75;$u$}
      \lbl[c]{150,58;$m_2$}
      \lbl[c]{150,25;$m_1$}

      \lbl[c]{230,-10;(B)}

      \lbl[l]{448,180;$u$}
      \lbl[l]{448,125;$u$}
      \lbl[l]{303,163;$v$}
      \lbl[l]{448,11;$t$}
      \lbl[l]{430,250;$F(u)$}
      \lbl[c]{303,243;$h_m$}

      \lbl[l]{327,175;$m_1$}
      \lbl[l]{423,175;$m_2$}

      \lbl[l]{303,75;$u$}
      \lbl[c]{302,58;$m_2$}
      \lbl[c]{302,25;$m_1$}

      \lbl[c]{380,-10;(C)}
    \end{lpic}
  \caption{Global strong TWS of \eqref{E_TWeq}. (A) Periodic wave; (B)
Solitary wave or pulse; (C) Smooth front.}
\label{Fig_smooth}
\end{figure}

\subsection{Elementary forms: smooth TWS on a  subset of $\mathbb{R}$.}\label{S_ElemForms}

In this subsection we discuss two types of  elementary forms. The first type, studied in cases (a) and (b)
below,  are the ones associated to the curves $\gamma_{h_0}$ which
correspond to the energy level $h_0=F(0)$ and which intersect the
singular line $\{u=0\}\times \R$ at a finite point. The second type
are the ones associated to curves $\gamma_{h}$ that are unbounded in
the component $v$ and are arbitrarily close to the singular line
$\{u=0\}\times \R$ at infinity. They are studied in the case (c).
  In both cases, it will be a key step to study whether the orbits
of \eqref{E_TWSys} approach the singular line $u=0$ in finite time.
To this end we prove the following auxiliary result.

\begin{Lemma}\label{L_finite_time} Let
$F$ be an analytic function.  Suppose that there exist
$\varepsilon>0$ and $m>0$ such that $F(m)=F(0)$ and $F(u)<F(0)$ for
$u\in (0,\varepsilon)\cup (m-\varepsilon,m)$.
   \begin{enumerate}
    \item[(i)] Let $n\in \mathbb{N}$ be the lowest order such that $F^{(n)}(0)\neq 0$.
If $n\leq 2$ then \begin{equation}\label{E_int1} \int_0^\ve
\sqrt{\frac{u}{F(0)-F(u)}} du\end{equation} is convergent. Otherwise
the integral diverges.
    \item[(ii)] If $F'(m)>0$, then \begin{equation}\label{E_int2}\int_{m-\ve}^m  \sqrt{\frac{u}{F(0)-F(u)}}
    du\end{equation}
  is convergent. Otherwise the integral
diverges.
   \end{enumerate}
 \end{Lemma}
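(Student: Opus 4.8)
The plan is to decide the convergence of each integral by comparison with a pure power near the single endpoint at which $F(0)-F(u)$ vanishes — $u=0$ in (i), $u=m$ in (ii) — using the elementary fact that $\int_{0}^{\delta}s^{\alpha}\,ds$ converges if and only if $\alpha>-1$. Away from that endpoint the integrand $\sqrt{u/(F(0)-F(u))}$ is continuous and positive, hence contributes a finite amount and is irrelevant to convergence. The role of analyticity is to factor $F(0)-F(u)$ as a power of the distance to the endpoint times an analytic function that does not vanish there, so that the integrand equals that power times a factor bounded between two positive constants near the endpoint.

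For (i): since $F(u)<F(0)$ on $(0,\ve)$, $F$ is non-constant near $0$, so $n$ is finite; from $F(u)-F(0)=\frac{F^{(n)}(0)}{n!}u^{n}+o(u^{n})$ and $u^{n}>0$ the sign condition forces $F^{(n)}(0)<0$. I would then write $F(0)-F(u)=u^{n}g(u)$ with $g$ analytic near $0$ and $g(0)=-F^{(n)}(0)/n!>0$, so that $c_{1}\le g\le c_{2}$ near $0$ for some $0<c_{1}\le c_{2}$; hence $\sqrt{u/(F(0)-F(u))}$ is squeezed between constant multiples of $u^{(1-n)/2}$, and \eqref{E_int1} converges iff $(1-n)/2>-1$, i.e. iff $n\le 2$, which is exactly the stated dichotomy.

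For (ii): expanding about $u=m$, the hypothesis $F(u)<F(0)=F(m)$ on $(m-\ve,m)$ rules out $F'(m)<0$, so the only alternative to $F'(m)>0$ is $F'(m)=0$ — this is what ``otherwise'' means. If $F'(m)>0$, write $F(0)-F(u)=(m-u)\,\tilde g(m-u)$ with $\tilde g$ analytic and $\tilde g(0)=F'(m)>0$; then the integrand is comparable to $(m-u)^{-1/2}$ near $m$, so \eqref{E_int2} converges. If $F'(m)=0$, let $k\ge 2$ be the first order with $F^{(k)}(m)\neq 0$ (finite, as $F$ is analytic and non-constant near $m$); the sign condition gives $F(0)-F(u)=(m-u)^{k}\tilde g(m-u)$ with $\tilde g(0)>0$, so the integrand is comparable to $(m-u)^{-k/2}$, and since $k\ge 2$ we have $-k/2\le -1$, whence \eqref{E_int2} diverges.

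None of this is deep; the only steps requiring care are the factorizations $F(0)-F(u)=u^{n}g(u)$ and $F(0)-F(u)=(m-u)^{k}\tilde g(m-u)$, with $g,\tilde g$ analytic and non-vanishing (hence bounded on both sides by positive constants) near the endpoint — immediate from analyticity — and, above all, extracting the correct sign of the leading Taylor coefficient from the hypothesis $F<F(0)$. It is this sign that turns a one-sided bound into the two-sided comparison needed for an ``if and only if'' statement, and the bookkeeping of signs and parities is the one place where I would expect a slip to be possible.
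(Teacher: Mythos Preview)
Your proposal is correct and follows essentially the same approach as the paper: both reduce convergence to comparison with a pure power of the distance to the singular endpoint via the leading Taylor term of $F(0)-F(u)$. The paper states this as a limit-comparison criterion ($\lim_{u\to c}f(u)|u-c|^{k}=A$ with $0\neq A\neq\infty$), while you phrase it as the equivalent analytic factorization $F(0)-F(u)=(\text{distance})^{n}g$ with $g$ nonvanishing; the resulting thresholds ($n\le 2$ in~(i), $n=1$ in~(ii)) and the sign bookkeeping are identical.
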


\begin{proof}
In both cases we will apply the following criterium:  let $f(x)$ be
an unbounded function at $x=c$ such that $f(x) \geq 0$ and
$\lim_{x\rightarrow c} f(x) |x-c|^k = A$, where $A \neq \infty$ and
$A\neq 0$. Then, for $k<1$ the integral $\int_a^c f(x) dx$ is
convergent, whereas for $k\geq 1$ it is divergent. To prove (i), set
\[
f(u):=\sqrt{\frac{u}{F(0)-F(u)}},\] and  $n\in\N$ such that
$F^{(n)}(0)<0$ and $F^{(k)}(0)=0$ for $k<n$. Hence
\[
    f(u)=\sqrt{\frac{-n!}{F^{(n)}(0) u^{n-1}+o(u^n)}},
 \]
 and
\[
    \lim_{u\rightarrow 0^+}f(u)|u|^{\frac{n-1}{2}}=\sqrt{\frac{-n!}{F^{(n)}(0)}}=:K, \mbox{ where }
0\neq K \neq \infty.
\]
Therefore, using the criterium stated above, for $n\leq 2$ the
integral \eqref{E_int1} converges and otherwise it diverges. To prove
(ii) let
\[
    f(u):=\sqrt{\frac{u}{F(m)-F(u)}},
\]
 and $n\geq 1$ such that $F^{(n)}(m)<0$ and $F^{(k)}(m)=0$ for $k<n$. Then
 \[
  f(u)=\sqrt{\frac{-n!\,u}{F^{(n)}(m) (u-m)^{n}+o((u-m)^{n+1})}},
  \]
and therefore
\[
    \lim_{u\rightarrow 0^+}
f(u)|u-m|^{\frac{n}{2}}=\sqrt{\frac{-n!\,m}{F^{(n)}(m)}}=:K, \mbox{
where } 0\neq K \neq \infty .
\]
Hence for $n=1$ the integral \eqref{E_int2} converges and otherwise
it diverges. ~\end{proof}

We start with a discussion of solutions  associated to
the energy level $h=h_0$. Suppose that there exists $m>0$ such
that $F(m)=h_0$ and $F(u)<h_0$ for all $u\in(0,m)$. In this case,
the curves $\gamma_{h_0}$ intersect the singular line $\{u=0\}$ at a finite point,
and the branches $(u,v_{h_0}^\pm(u))$ are defined for $u\in[0,m]$,
and coincide at the point $p=(m,0)$. As in the discussion for
smooth solutions in  Section \ref{S_SmoothTW} we consider the following cases:

\begin{figure}[ht]
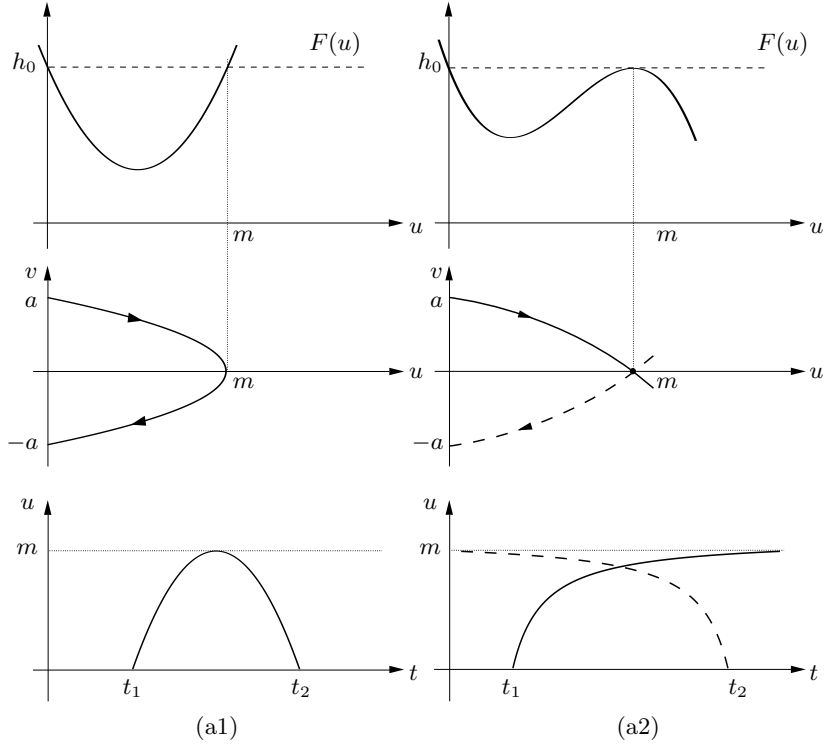

\footnotesize
  \centering
    \begin{lpic}[l(2mm),r(2mm),t(2mm),b(2mm)]{FigureAll2(0.35)}
      \lbl[l]{143,180;$u$}
      \lbl[l]{143,125;$u$}
      \lbl[l]{-3,163;$v$}
      \lbl[l]{143,11;$t$}
      \lbl[l]{105,250;$F(u)$}
      \lbl[c]{-3,243;$h_0$}
      \lbl[l]{-3,152;$a$}
      \lbl[l]{-10,98;$-a$}

      \lbl[l]{76,177;$m$}

       \lbl[l]{76,121;$m$}

       \lbl[c]{-2,75;$u$}
      \lbl[c]{-2,58;$m$}

      \lbl[c]{38,5;$t_1$}
      \lbl[c]{102,5;$t_2$}

       \lbl[c]{70,-10;(a1)}

      \lbl[l]{295,180;$u$}
      \lbl[l]{295,125;$u$}
      \lbl[l]{151,163;$v$}
      \lbl[l]{295,11;$t$}
      \lbl[l]{275,250;$F(u)$}
      \lbl[c]{151,243;$h_0$}
      \lbl[l]{151,152;$a$}
      \lbl[l]{144,98;$-a$}

      \lbl[l]{237,177;$m$}

      \lbl[l]{237,121;$m$}

      \lbl[c]{151,75;$u$}

      \lbl[c]{151,58;$m$}

      \lbl[c]{182,5;$t_1$}
      \lbl[c]{268,5;$t_2$}

      \lbl[c]{230,-10;(a2)}
    \end{lpic}
\caption{Strong solutions of \eqref{E_TWeq} defined in a subset of
$\R$ with energy $h=h_0$ when $F'(0)\neq0$.
These provide the elementary forms to construct peaked periodic and
solitary TWS.}
\label{Fig_sing-I}
\end{figure}

\begin{enumerate}
  \item[(a)] Suppose that $F'(0)\neq 0$. Then,   $$
\lim\limits_{u\rightarrow 0^+} v_{h_0}^\pm(u)=
\lim\limits_{u\rightarrow 0^+} \pm\sqrt{2\,\frac{F(0)-F(u)}{u}}=\pm
\sqrt{-2F'(0)} =:\pm a\neq \pm \infty.
$$ and therefore the branches $(u,v_{h_0}^\pm(u))$ intersect the
singular line $\{u=0\}$ at two distinct points $(0,\pm a)$. Moreover, they reach the singular line in finite time, since  Lemma \ref{L_finite_time} (i) guarantees that the
orbit $\gamma_{h_0}$ of system \eqref{E_TWSys} connects the point $(0,+a)$ with the
point $(\bar{u},v_{h_0}^+(\bar{u}))$ in \emph{finite time} $\Delta
T(\bar{u})$. Indeed, by direct integration of system \eqref{E_TWSys}
with $v=v^+_{h_0}(u)$, and recalling that $F'(0)<0$, we have
\begin{equation}\label{E_fin-time}
  \Delta T(\bar{u}) = \displaystyle{\int_0^{\bar{u}}
\frac{du}{v}}=\displaystyle{\int_0^{\bar{u}} \sqrt
\frac{u}{2\left(h_0-F(u)\right)}\,
du}=\displaystyle{\frac{1}{\sqrt{2}}\int_0^{\bar{u}} \sqrt
\frac{u}{F(0)-F(u)}\, du} <\infty.
\end{equation}

\begin{enumerate}
  \item[(a1)]   If $F'(m)\neq 0$, then $p$ is a regular point
  and there is an isolated orbit of \eqref{E_TWSys} connecting the
points $(0,+a)$ and $(0,-a)$, see Figure \ref{Fig_sing-I}\,(a1). By the
above arguments, this orbit connects these points in a finite time,
hence it gives rise to a strong solution $u(t)$ of \eqref{E_TWeq}
defined for some finite interval $(t_1,t_2)$. As in the smooth
cases, this isolated orbit defines a curve with no lobes and
therefore $u(t)$ has a unique maximum $m$ at $t=(t_1+t_2)/2$.  This
solution is symmetric with respect its maximum due to the  symmetry
of the curve $\gamma_{h_0}$.

  \item[(a2)] If $F'(m)= 0$, then $p$ is a critical point and
  the branch $(u,v_{h_0}^+(u))$ defines an orbit
  leaving its $\alpha$-limit   $(+a,0)$ in  finite time
 and reaching its $\omega$-limit $p$ in infinite time. This orbit corresponds to
  a strong solution $u_+(t)$ of \eqref{E_TWeq}
defined for some interval $(t_1,+\infty)$, $t_1\in\R$, such that
$\lim_{t\rightarrow t_1^+} u_+(t)=0$, $\lim_{t\rightarrow t_1^+}
\dot u_+(t)=a$ and $\lim_{t\rightarrow \infty} u_+(t)=m$, Figure
\ref{Fig_sing-I}\,(a2). Analogously, the  branch $(u,v_{h_0}^-(u))$
gives an orbit such that each point on the orbit connects with the
point $p$ in an infinite time, and  with $(-a,0)$ in a finite time.
This gives a strong solution $u_-(t)$ of \eqref{E_TWeq} defined for
some interval $(-\infty,t_2)$, $t_2\in\R$, such that
$\lim_{t\rightarrow t_2^-} u_-(t)=0$, $\lim_{t\rightarrow t_2^-}
\dot u_-(t)=-a$ and $\lim_{t\rightarrow -\infty} u_-(t)=m$. Notice
that $u_+(t_1-t)=u_-(t_1-t)$ when $t_1=t_2$ due to the symmetry of
system \eqref{E_TWSys}.
\end{enumerate}

\begin{figure}[ht]
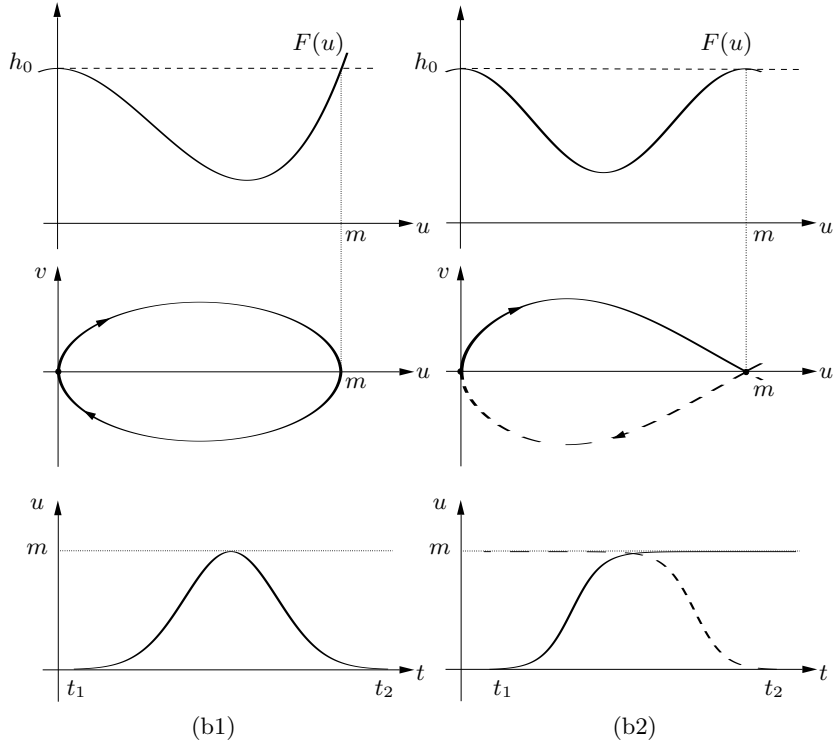

\footnotesize
  \centering
    \begin{lpic}[l(2mm),r(2mm),t(2mm),b(2mm)]{FigureAll3(0.35)}
      \lbl[l]{147,180;$u$}
      \lbl[l]{147,125;$u$}
      \lbl[l]{2,163;$v$}
      \lbl[l]{147,11;$t$}
      \lbl[l]{100,250;$F(u)$}
      \lbl[c]{-3,243;$h_0$}

      \lbl[l]{120,177;$m$}

       \lbl[l]{120,121;$m$}

      \lbl[c]{3,58;$m$}

      \lbl[c]{3,75;$u$}

      \lbl[c]{18,5;$t_1$}
      \lbl[c]{135,5;$t_2$}

       \lbl[c]{70,-10;(b1)}

      \lbl[l]{300,180;$u$}
      \lbl[l]{300,125;$u$}
      \lbl[l]{155,163;$v$}
      \lbl[l]{300,11;$t$}
      \lbl[l]{255,250;$F(u)$}
      \lbl[c]{151,243;$h_0$}

      \lbl[l]{275,177;$m$}

      \lbl[l]{275,118;$m$}

      \lbl[c]{156,58;$m$}
      \lbl[c]{156,75;$u$}

      \lbl[c]{180,5;$t_1$}
      \lbl[c]{283,5;$t_2$}

      \lbl[c]{230,-10;(b2)}
    \end{lpic}
 \caption{Strong solutions of \eqref{E_TWeq} defined in a subset of
$\R$ with energy $h=h_0$  when $F'(0)= 0$.
These provide elementary forms to construct TWS of class
$\mathcal{C}^1(\R)$ (with compact support).}\label{Fig_sing-II}
\end{figure}

  \item[(b)] Suppose that $F'(0)=0$. Then,
  $$
\lim\limits_{u\rightarrow 0^+} v_{h_0}^\pm(u)=\pm \sqrt{-2F'(0)} =0.
$$
Hence the branches $(u,v_{h_0}^\pm(u))$ are defined for $u\in[0,m]$,
and coincide in $(0,0)$ and $p=(m,0)$.

\begin{enumerate}
  \item[(b1)]   If $F'(m)\neq 0$, then $p$ is a regular point.
  Hence,  there is an isolated orbit of \eqref{E_TWSys} in $\{u>0\}\times \R$
  whose $\alpha$ and $\omega$--limit is $(0,0)$, Figure \ref{Fig_sing-II}\,(b1). Lemma \ref{L_finite_time} (i) and (ii) ensures
that the orbit of \eqref{E_TWSys} connects $(0,0)$ with $(m,0)$
 in finite time if $F''(0)<0$ and $F'(m)>0$.
  Therefore, it corresponds to a strong solution $u(t)$ of
\eqref{E_TWeq} defined in a finite interval $(t_1,t_2)$, such that
$\lim_{t\rightarrow t_1^+}u(t) = \lim_{t\rightarrow
t_2^-}u(t)=0$ and $\lim_{t\rightarrow t_1^+}\dot u(t)=$ $\lim_{t\rightarrow
t_2^-}\dot u(t)=0$. In addition, $u(t)$ reaches a unique local maximum
$m$ at $t=(t_1+t_2)/2$ and is symmetric with respect to this maximum due to the symmetry of system \eqref{E_TWSys}.

  \item[(b2)] If $F'(m)= 0$, then $p$ is a critical point and
there are two orbits with energy $h_0$ connecting $(0,0)$  with $p$, cf.~Figure~\ref{Fig_sing-II}\,(b2).
In view of Lemma \ref{L_finite_time}, any point of this orbit
connects with $(0,0)$ in finite time, thus the heteroclinic orbit
connecting $(0,0)$ with $p$ through the branch $(u,v_{h_0}^+(u))$
corresponds to a strong solution $u_+(t)$ of \eqref{E_TWeq} defined
in $(t_1,\infty)$, $t_1\in \R$,  such that $\lim_{t\rightarrow
t_1^+}u_+(t)=\lim_{t\rightarrow
t_1^+}\dot u_+(t)=0$ and $\lim_{t\rightarrow \infty}u(t)=m$.  The  branch $(u,v_{h_0}^-(u))$ corresponds to a strong solution
$u_-(t)$  defined in $(-\infty,t_2)$ with $t_2\in
\R$  such that $\lim_{t\rightarrow t_2^-}u_-(t)=\lim_{t\rightarrow
t_1^+}\dot u_-(t)=0$ and
$\lim_{t\rightarrow -\infty}u(t)=m$. Notice that
$u_+(t_1+t)=u_-(t_1-t)$ when $t_1=t_2$ due to the symmetry of system
\eqref{E_TWSys}.
\end{enumerate}
\end{enumerate}

The second type of solutions are the ones associated to curves
$\gamma_h$ which are defined for $u\in (0,m]$ but do not tend to a
finite point on the singular line $\{u=0\}\times \R$, hence they satisfy
$$
\lim\limits_{u\rightarrow 0^+} v_{h}^\pm=\pm\infty.
$$
Such curves appear if and only if there exists $m>0$ such that
$F(u)<h_m=F(m)$ for all $u\in[0,m)$, see Figure \ref{Fig_sing-III}.
 In addition,
observe that for $0<\bar{u}<m$
\begin{equation}\label{E_fin-time-2}
\Delta T(\bar{u})=\int_0^{\bar{u}}
\frac{du}{v}=\frac{1}{\sqrt{2}}\,\int_0^{\bar{u}} \sqrt
\frac{u}{F(m)-F(u)} du <\infty,
\end{equation}
since  $F(m)-F(u)>0$ for $u\in[0,m)$. Consequently,  the  orbits
 of \eqref{E_TWSys} passing through the points
 $(\bar{u},v_{h_m}^\pm(\bar{u}))$ are unbounded
in the component $v$ but aproach the singular line $u=0$ at infinity in
 finite time.
Analogously to the previous cases, we distinguish between two
scenarios:

\begin{enumerate}
\item[(c1)] If $F'(m)\neq 0$ then the branches $(u,v_{h_m}^\pm(u))$ coincide at
 the regular point $p=(m,0)$.
Thus there is an unbounded orbit of \eqref{E_TWSys} tending to
infinity in the $v$-direction in finite time, Figure
\ref{Fig_sing-III}\,(c1). It corresponds to a strong solution $u(t)$ of
\eqref{E_TWeq} defined in a finite interval $(t_1,t_2)$ such that
$\lim_{t\rightarrow t_1^+}u(t)=\lim_{t\rightarrow t_2^-}u(t)=0$ and
$\lim_{t\rightarrow t_1^+} \dot u(t)=+\infty$, $\lim_{t\rightarrow
t_2^-}\dot u(t)=-\infty$. This solution has a unique maximum $m$ at
$t=(t_1+t_2)/2$ and it is symmetric with respect this point.

\item[(c2)] If $F'(m)= 0$ then the branches $(u,v_{h_m}^\pm(u))$
 coincide at the critical point $p=(m,0)$ and there are
two unbounded orbits of \eqref{E_TWSys} tending to infinity in the
$v$-direction in finite time, cf.~Figure \ref{Fig_sing-III}\,(c2), but
any point in the orbit takes an infinite time to reach the singular
point $p$.
The orbit defined by the branch $(u,v_{h_m}^+(u))$  corresponds to
a strong solution $u_+(t)$ of \eqref{E_TWeq} defined for
$t\in(t_1,\infty)$, $t_1\in\R$, such that
$\lim_{t\rightarrow t_1^+}u_+(t)=0$, $\lim_{t\rightarrow
t_1^+}\dot u_+(t)=+\infty$, and $\lim_{t\rightarrow \infty}u_+(t)=m$. Similarly, the
orbit defined by the branch $(u,v_{h_m}^-(u))$ corresponds to a
strong solution $u_-(t)$ of \eqref{E_TWeq} defined for
$t\in(-\infty,t_2)$, $t_2\in\R$, such that
$\lim_{t\rightarrow -\infty}u_-(t)=m$ and $\lim_{t\rightarrow
t_2^-}u_-(t)=0$, $\lim_{t\rightarrow t_2^-}\dot u_-(t)=-\infty$.
 Notice that as before, $u_+(t_1+t)=u_-(t_1-t)$ when $t_1=t_2$ due to the symmetry of system \eqref{E_TWSys}.

\end{enumerate}

\begin{figure}[h]
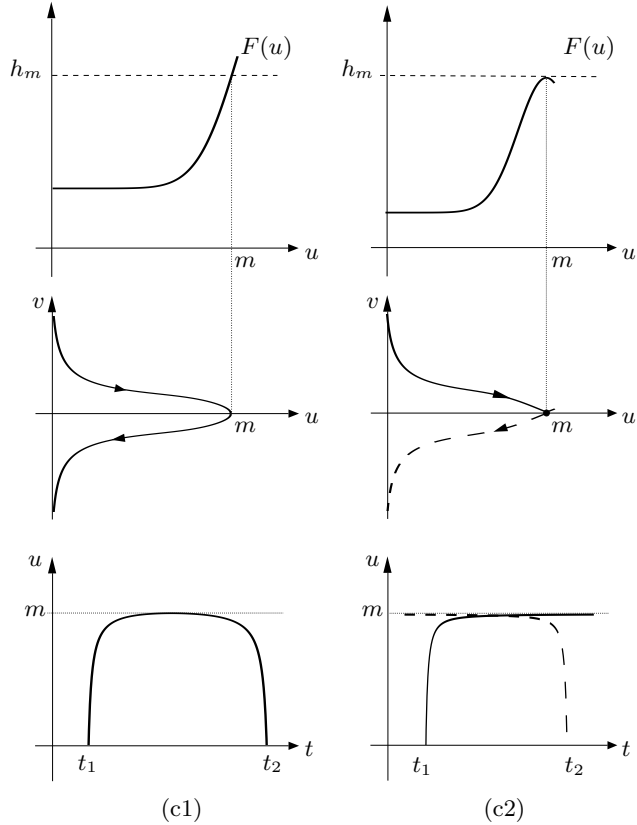

\footnotesize
  \centering
    \begin{lpic}[l(2mm),r(2mm),t(2mm),b(2mm)]{FigureAll4(0.39)}
      \lbl[l]{92,180;$u$}
      \lbl[l]{92,124;$u$}
      \lbl[l]{-1,163;$v$}
      \lbl[l]{92,11;$t$}
      \lbl[l]{70,250;$F(u)$}
      \lbl[c]{-3,243;$h_m$}

      \lbl[l]{68,177;$m$}

       \lbl[l]{68,121;$m$}

      \lbl[c]{0,58;$m$}
      \lbl[c]{0,75;$u$}

      \lbl[c]{18,6;$t_1$}
      \lbl[c]{81,6;$t_2$}

       \lbl[c]{50,-10;(c1)}

      \lbl[l]{200,180;$u$}
      \lbl[l]{200,124;$u$}
      \lbl[l]{113,163;$v$}
      \lbl[l]{200,11;$t$}
      \lbl[l]{180,250;$F(u)$}
      \lbl[c]{110,243;$h_m$}

      \lbl[l]{176,177;$m$}

      \lbl[l]{176,121;$m$}

      \lbl[c]{115,58;$m$}
      \lbl[c]{115,75;$u$}

      \lbl[c]{184,5;$t_2$}
      \lbl[c]{132,5;$t_1$}

      \lbl[c]{160,-10;(c2)}
    \end{lpic}
\caption{Strong solutions of \eqref{E_TWeq} defined in a subset of
$\R$ associated to curves $\gamma_h$ with
unbounded component $v$ near the singular line $\{u=0\}\times \R$.
They provide elementary forms to construct TWS with
cusps.}\label{Fig_sing-III}
\end{figure}

\section{Singular traveling wave solutions}\label{S_STW}

This section is devoted to the characterization of singular TWS of equation
\eqref{E_TWeq}.  We are interested in singular
TWS such that either $v_h^\pm(0)$ is well-defined  for $h = h_0$, or
 $\lim_{u\rightarrow 0^+} v_h^\pm(u)=\pm \infty$
 for $h\neq h_0$.

\subsection{Peaked waves}\label{S_peaked}

A  TWS  of \eqref{E_TWeq} given by a function $u:\R\rightarrow\R$ is
called \emph{peaked}  if it is smooth except at a finite or
countable number of points (\emph{peaks}) $\mathcal{S}=\{t_k\in\R,\,
k\in\Z \}$ where
$$
    0 \neq \lim_{t\rightarrow t_k^+} u'(t) = -  \lim_{t\rightarrow t_k^-} u'(t) \neq \pm \infty,
$$
The main result of this section is the following characterization:
\begin{Proposition}\label{P_peaked} The equation \eqref{E_TWeq}
has peaked TWS if and only if
\begin{itemize}
\item  $F'(0)<0$ and
\item   there exists $m>0$ such that $F(m)=F(0)$ and $F(u)<F(0)$ for $u\in
(0,m)$.
\end{itemize}
These solutions are either
 \begin{enumerate}[(i)]
 \item peaked periodic, with period
 $T=\displaystyle{\frac{2}{\sqrt{2}}\int_0^m\sqrt{\frac{u}{F(0)-F(u)}}du}$, if and only if in addition $F'(m)\neq
  0$, or
  \item peaked solitary if and only if in addition
  $F'(m)=0$.
 \end{enumerate}
These solutions are weak singular TWS and they are analytic except
for a discontinuity in the first derivative at the peaks.
Furthermore, the peaked solitary waves are symmetric with respect to
their unique maximum and decay exponentially to zero at infinity.
Peaked periodic solutions have a unique maximum and minimum per
period and are symmetric with respect to these local extrema.
\end{Proposition}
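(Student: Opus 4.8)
The plan is to read off the phase portrait of \eqref{E_TWSys} on the energy level $h=h_0$ directly from the two hypotheses, to glue the corresponding elementary forms of Section~\ref{S_ElemForms}, and then to check that what results is precisely the class of peaked TWS.

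\emph{Necessity.} Suppose $u$ is a peaked TWS and let $t_k$ be a peak. On each smooth piece bordering $t_k$, $u$ is a classical solution of \eqref{E_TWeq} and hence of \eqref{E_singTW} for some constant, so locally $\dot u=v_h^{\pm}(u)$ with $v_h^{\pm}$ as in \eqref{E_vh}. Put $u_0:=u(t_k)$. If $u_0>0$, then either $(u_0,v_h^{\pm}(u_0))$ is a regular point of \eqref{E_TWSys}, so $u$ extends smoothly through $t_k$, or $F(u_0)=h$ and then $\lim_{t\to t_k}\dot u=0$; both contradict the peak condition $0\ne\lim_{t\to t_k}\dot u\ne\pm\infty$. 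Hence $u_0=0$, so \eqref{E_weaksingular} forces $h=F(0)=h_0$, and the same peak condition together with $\lim_{u\to0^+}v_{h_0}^{\pm}(u)=\pm\sqrt{-2F'(0)}$ forces $F'(0)<0$. Since $u\ge0$, immediately to the right of $t_k$ the orbit runs along $\gamma_{h_0}$ in $\{u>0\}$ with $v>0$ until $v_{h_0}^{+}$ vanishes, i.e.\ until the first $m>0$ with $F(m)=F(0)$, and then necessarily $F(u)<F(0)$ on $(0,m)$. If $F'(m)\ne0$ (so $F'(m)>0$), then $p=(m,0)$ is a regular turning point and, by Lemma~\ref{L_finite_time}, the orbit returns to $\{u=0\}$ in the finite time $2\Delta T(m)$, producing the next peak; iterating and using $u\ge0$ shows $u$ is periodic of period $T=2\Delta T(m)$. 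If $F'(m)=0$, then $p$ is a critical point of \eqref{E_TWSys}, reached only as $t\to\pm\infty$, so $u$ carries a single peak and is solitary.

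\emph{Sufficiency and construction.} Conversely, assume $F'(0)<0$ and that such an $m$ exists; then we are exactly in case~(a) of Section~\ref{S_ElemForms} with $a=\sqrt{-2F'(0)}>0$. When $F'(m)\ne0$, take the elementary form of case~(a1): a classical solution $u$ on a finite interval $(t_1,t_2)$ with $u(t_1)=u(t_2)=0$, $\dot u(t_1^+)=a$, $\dot u(t_2^-)=-a$, symmetric about its midpoint, where it attains the value $m$, and with $t_2-t_1=2\Delta T(m)$ by \eqref{E_fin-time}. Extend $u$ to all of $\R$ by $(t_2-t_1)$-periodicity; at every translate of $t_1$ the one-sided derivatives are $-a$ and $a$, so these are peaks, and the period equals $2\Delta T(m)=\tfrac{2}{\sqrt2}\int_0^m\sqrt{u/(F(0)-F(u))}\,du$. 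When $F'(m)=0$, translate so that $t_1=t_2=0$ in case~(a2) and glue $u:=u_-$ on $(-\infty,0]$ to $u:=u_+$ on $[0,\infty)$; the result is continuous with a derivative jump from $-a$ to $a$ at the single peak $t=0$, and tends to $m$ as $t\to\pm\infty$. In both cases $\dot u=v_{h_0}^{\pm}(u)$ stays bounded (indeed $|\dot u|\le\sup_{(0,m]}|v_{h_0}^{+}|<\infty$), so $u$ is Lipschitz, hence $u\in H^1_{loc}(\R)$, it solves \eqref{E_singTW} classically with $h=h_0$ off the discrete peak set $\mathcal S$, and $\lim_{t\to t_k}\bigl(u\dot u^2/2+F(u)\bigr)=0+F(0)=h_0$, so \eqref{E_weaksingular} holds. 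To obtain \eqref{E_weak}, I split the integral over the intervals between consecutive peaks, on each of which $u$ is smooth (indeed $C^2$ up to the endpoints), and integrate $(u^2)_t\phi_t=2u\dot u\,\phi_t$ by parts: the interior terms sum to $-2\int_{\R}(u\ddot u+\tfrac12\dot u^2+F'(u))\phi\,dt=0$ by \eqref{E_TWeq}, while the boundary terms $[\,2u\dot u\,\phi\,]$ at the peaks vanish because $u(t_k)=0$, so the jump of $\dot u$ contributes nothing; hence $u$ is a non-constant TWS, and therefore a weak singular TWS. Analyticity off $\mathcal S$ follows since $v_{h_0}^{\pm}(u)=\pm\sqrt{2(F(0)-F(u))/u}$ is analytic on a neighbourhood of $[0,m)$ (the radicand being analytic and positive there) and $u$ remains analytic across the regular turning point $u=m$, leaving only a corner (a jump of $\dot u$) at each peak.

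\emph{Symmetry, decay, and the main difficulty.} The symmetry of the solutions with respect to their extrema is inherited from the symmetry of the elementary forms, which itself comes from the reversibility $(t,u,v)\mapsto(-t,u,-v)$ of \eqref{E_TWSys}; in the solitary case the approach to $m$ at $\pm\infty$ is exponential because the linearization of \eqref{E_TWSys} at $p=(m,0)$ has eigenvalues $\pm\sqrt{-F''(m)/m}$, and $F''(m)\le0$ is forced by $F(u)<F(m)$ near $m$ together with $F'(m)=0$, the decay being merely algebraic in the borderline case $F''(m)=0$. I expect the crux of the argument to be the necessity direction: the observation that a peak can only sit on the singular line $\{u=0\}$ --- which pins the energy to $h_0$ and forces $F'(0)<0$ at one stroke --- and the ensuing structural dichotomy, namely that on $\gamma_{h_0}$ the orbit must be either a periodic chain of identical elementary bumps ($F'(m)\ne0$) or a single bounded pair of arcs asymptotic to $p$ ($F'(m)=0$), with nothing in between. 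Once that is in place, the weak formulation across the corners is essentially automatic, precisely because the corners occur where $u$ vanishes.
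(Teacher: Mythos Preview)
Your proof is correct and follows essentially the same route as the paper: identify the elementary forms of case~(a) in Section~\ref{S_ElemForms}, glue them via the non-uniqueness at $u=0$, and verify the resulting function is a weak singular TWS. You are in fact more explicit than the paper in two places --- the integration-by-parts verification of \eqref{E_weak} (where the boundary terms vanish precisely because $u(t_k)=0$), and the asymptotics in the solitary case (correctly noting the approach is to $m$, with the exponential-versus-algebraic dichotomy governed by $F''(m)$) --- while the paper simply records that $\tilde u\in H^1_{loc}$ and invokes Lemma~\ref{L_uniq2}.

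One small point in your necessity argument deserves tightening: when $u_0>0$ and $F(u_0)<h$, you say ``$(u_0,v_h^{\pm}(u_0))$ is a regular point of \eqref{E_TWSys}, so $u$ extends smoothly''. Uniqueness for the planar system does not by itself prevent a \emph{weak} solution of \eqref{E_weak} from jumping between the $v_h^{+}$ and $v_h^{-}$ branches at $u_0>0$. What actually rules this out is exactly your later computation: the boundary term $[\,2u\dot u\,\phi\,]_{t_k}$ equals $2u_0\cdot(\text{jump in }\dot u)\cdot\phi(t_k)$, which fails to vanish unless $u_0=0$. So the weak formulation is needed already in the necessity direction, not only in the sufficiency check. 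The paper's proof is equally informal on this point, so this is a refinement rather than a correction.
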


\begin{proof} Peaked TWS are compositions of the
elementary forms studied in the case (a) of Section
\ref{S_ElemForms}. They appear as a consequence of the existence of values  $t\in\R$ such that the solutions of \eqref{E_TWeq} reach $u=0$  with non-vanishing one-sided derivative.
These solutions are associated to the integral curves of
\eqref{E_TWSys} with energy $h=h_0$. The branches $(u,v_{h_0}^\pm(u))$
must be defined and bounded, intersecting the singular line $\{u=0\}$ at finite points $(0,+a)$ and $(0,-a)$ different from $(0,0)$,
so that the corresponding solutions reach $0$ with
non-vanishing derivative.  Therefore, there must exist $m>0$ such that $F(u)<F(0)$ for
$u\in(0,m)$ and $F(m)=F(0)$, and additionally $F'(0)<0$.  These two
necessary conditions are also sufficient for the existence of peaks,
as we will see in the construction of solutions in the proofs of (i)
and (ii).

Suppose that $F'(m)\neq 0$, so  $(m,0)$ is a regular point and
therefore, by the symmetry of system \eqref{E_TWSys} and by equation
\eqref{E_fin-time}, there is an orbit of system \eqref{E_TWSys}
connecting $(0,+a)$ with $(0,-a)$ in finite time. This means that
there exist $t_1,t_2 < \infty$ such that the first component of the
solution of system \eqref{E_TWSys}, $u(t)$, is a strong solution of
equation \eqref{E_TWeq} (and equation \eqref{E_singTW} with $h=h_0$)
with maximal interval of definition given by $t\in(t_1,t_2)$. This
solution is the elementary form studied in the case (a1) in Section
\ref{S_ElemForms}, hence $\lim_{t\rightarrow t_1^+}u(t) = 0$ and
$\lim_{t\rightarrow t_1^+}\dot{u}(t) = a$. By symmetry, $
\lim_{t\rightarrow t_2^-}u(t) = 0$, $\lim_{t\rightarrow
t_2^-}\dot{u}(t) =-a$ and $u\left(\frac{t_1+t_2}{2}\right)=m$.
 Lemma \ref{L_uniq2} allows us to extend the above solution continuously to
$\R$ by gluing together copies of it defined in the intervals
$(t_k,t_{k+1})$ with $k\in\Z$, where $t_k=t_1+k(t_2-t_1)$. This
leads to the function
\begin{equation}\label{E_Periodicpeaked}
\tilde{u}(t)=\left\{
       \begin{array}{ll}
         u\left(t-(k-1)(t_2-t_1)\right) & \mbox{for } t\in (t_k,t_{k+1}), \\
         0 & \mbox{for } t=t_k,
       \end{array}
     \right.
\end{equation}
which is periodic with period
$$
T=t_2-t_1=\frac{2}{\sqrt{2}}\int_0^m\sqrt{\frac{u}{F(0)-F(u)}}\, du.
$$
By Lemma \ref{L_finite_time} (ii) the above integral is well
defined, since $F'(m)\neq 0$. Notice also that $\lim_{t\rightarrow
t_k}\tilde{u}(t) = 0$ and $\lim_{t\rightarrow
t_{k}^\pm}\dot{\tilde{u}}(t) =\pm a$, so $\tilde{u}(t)$ is
peaked periodic. Furthermore, the solution attains a unique
maximum $m$ at $t=(t_k+t_{k+1})/2$ and is symmetric with respect to
it on  each period. Observe that $\tilde{u}\in H^1_{loc}(\R)$.
Indeed, the function $\tilde{u}$ is bounded and analytic in
$\R\setminus \mathcal{S}$ where $\mathcal{S}=\{t_k,\, k\in\Z\}$, and
since $\lim_{t\rightarrow t_k} (\dot{\tilde{u}}(t))^2=a^2$, also
$|\dot{\tilde{u}}|^2$ is bounded in $\R\setminus\mathcal{S}$.
Recalling in addition that $\tilde{u}(t)$ is a strong solution of
\eqref{E_TWeq} on each interval $(t_k,t_{k+1})$, cf.~(a1) in Section
\ref{S_ElemForms}, we get that  it is a TWS in the sense of
Definition \ref{D_TWS}. By construction $\tilde{u}(t)$ satisfies
equation \eqref{E_singTW} with $h=h_0$ except at the points in
$\mathcal{S}$ where the equation holds in the limit \eqref{E_weaksingular}. So
according to Definition \ref{D_singularTWS},  it is a weak singular
TWS of  \eqref{E_TWeq}. Finally, observe that the solution
$\tilde{u}$ constructed above is the only possible continuous
continuation of $u(t)$ on $\mathbb{R}$.

Suppose now that $F'(m)= 0$, so $(m,0)$ is a critical point of
\eqref{E_TWSys}. In this case there exist two orbits connecting
$(m,0)$ with $(0,+a)$ and $(0,-a)$, respectively, cf.~the elementary
forms of (a2) in Section \ref{S_ElemForms}. Again, Lemma
\ref{L_finite_time} (i) guarantees that any point on these orbits
reaches $(0,\pm a)$ in finite time, but it takes an infinite time to
reach $(m,0)$. So there exist
strong solutions $u_{\pm}(t)$ of \eqref{E_TWeq} defined on
$(t_1,\infty)$ and $(-\infty,t_2)$, respectively. The non-uniqueness of solutions then allows
us to choose $t_2=t_1$ to  construct the
function
$$
u(t)=\left\{
       \begin{array}{ll}
         u_-(t) & \mbox{for } t\in (-\infty,t_1], \\
         u_+(t) & \mbox{for } t\in (t_1,+\infty),
       \end{array}
     \right.
$$
which is a peaked solitary wave defined on $\R$. The same
arguments as before show that $u$ is in  $H^1_{loc}(\R)$, it is a
weak solution of \eqref{E_TWeq}, and in particular  a weak singular
TWS of
 \eqref{E_TWeq}. Again, $u$  is the only
possible continuous continuation of $u_-$ and $u_+$ onto $\R$ .
~\end{proof}

\subsection{Solitary waves with compact support  and associated composite waves}\label{S_compactsupport}

A  TWS  of  \eqref{E_TWeq} given by a function $u:\R\rightarrow\R$
has \emph{compact support} if  there exist $-\infty < t_1 < t_2 <
\infty$ such that $u$ is constant on $\R\setminus[t_1,t_2]$ and $u$
is non constant on $[t_1,t_2]$. We call $[t_1,t_2]$ the support of $u$. A
solitary wave with compact support, or simply \emph{compact solitary
wave}, is a continuous TWS which has compact support and a unique
extremum. We say that a TWS $\tilde{u}(t)$ is a composite wave
associated to a compact solitary wave $u(t)$ with support
$[t_1,t_2]$, or simply a \emph{composite wave of} $u(t)$, if it is
obtained by gluing together copies of one compact solitary wave in
such a way that the supports of each copy do not overlap. More
precisely, $\tilde{u}(t)$ is a composite wave of a solitary wave
$u(t)$ if there exists a collection of intervals
$I_k=\left(t_1-a_k(t_2-t_1),t_2-a_k(t_2-t_1)\right)$ for $a_k\in\R$
and $k\in K$, where $K$ is either $\Z$ or a finite collection of
indices, with $I_k\cap I_j\neq \emptyset$ for all $k\neq j$, such
that
$$
\tilde{u}(t)=\left\{
       \begin{array}{ll}
         u(t+a_k(t_2-t_1)) & \mbox{if } t\in I_k\mbox{ for some }k\in K, \\
         0 & \mbox{if } t\in \R\setminus \cup_{k\in
         K}I_k.
       \end{array}
     \right.
$$
The following result characterizes the compact solitary waves and composite waves.

\begin{Proposition}\label{P_cpsupp}
The equation  \eqref{E_TWeq} has compact solitary TWS and their
associated composite solutions if and only if
\begin{itemize}
\item  $F'(0)=0$  and  $F''(0)<0$, and
\item   there exists $m>0$ such that $F(m)=F(0)$, $F(u)<0$ for $u\in
(0,m)$ and $F'(m)\neq 0$.
\end{itemize}
These solutions are either compact solitary waves, composite
multi-bump solutions with compact support or composite waves with
non-compact support that can be either aperiodic or periodic with
arbitrary period. These solutions are
strong singular TWS, they are  $\C^1(\R)$ and piecewise analytic.
Furthermore, the compact solitary waves are symmetric with respect
to their unique maximum.
\end{Proposition}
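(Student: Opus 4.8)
The plan is to mirror the structure of the proof of Proposition~\ref{P_peaked}, replacing the elementary forms of case (a) by those of case (b) of Section~\ref{S_ElemForms}. First I would establish necessity. A compact solitary wave $u(t)$ must be non constant on $[t_1,t_2]$ and constant (hence zero, after normalizing $F$ so that the constant state corresponds to $u=0$) outside; continuity forces $\lim_{t\to t_1^+}u(t)=\lim_{t\to t_2^-}u(t)=0$, and the $\C^1$ requirement forces the one-sided derivatives at $t_1,t_2$ to vanish. Since $u$ solves \eqref{E_singTW} on $(t_1,t_2)$, evaluating \eqref{E_vh} as $u\to 0^+$ shows $v_{h}^{\pm}(0)=\pm\sqrt{-2F'(0)}$ must be both finite and \emph{zero}, which gives $F'(0)=0$; and by Lemma~\ref{L_uniq2}(b) the relevant energy level is $h=h_0=F(0)$. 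For $u$ to actually leave $u=0$ in finite time with zero slope and come back, Lemma~\ref{L_finite_time}(i) requires the lowest nonvanishing derivative at $0$ to be of order $n\le 2$; combined with $F'(0)=0$ this forces $F''(0)\neq 0$, and the sign condition $u(h_0-F(u))>0$ near $0$ (equivalently $F(u)<F(0)$ for small $u>0$) forces $F''(0)<0$. Finiteness of the support on the right end requires the orbit to return to the $u$-axis in finite time, so $p=(m,0)$ must be a regular point, i.e.\ $F'(m)\neq 0$; and the branches $(u,v_{h_0}^{\pm}(u))$ must be defined on $[0,m]$, i.e.\ $F(u)<F(0)$ on $(0,m)$ with $F(m)=F(0)$. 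This yields all the stated necessary conditions.

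Next I would prove sufficiency and build the solutions. Under the hypotheses, the curve $\gamma_{h_0}$ has two branches $(u,v_{h_0}^{\pm}(u))$ defined on $[0,m]$, meeting at $(0,0)$ and at the regular point $(m,0)$; this is exactly the elementary form of case (b1) in Section~\ref{S_ElemForms}. By Lemma~\ref{L_finite_time}(i) (using $F''(0)<0$) and (ii) (using $F'(m)>0$, after possibly reflecting $u\mapsto m-u$ so that the sign is right, or simply noting $F'(m)\neq 0$ at the relevant endpoint), the orbit connects $(0,0)$ to $(m,0)$ and back in finite time, giving a strong solution $u(t)$ of \eqref{E_TWeq} on a finite interval $(t_1,t_2)$ with $u(t_1^+)=u(t_2^-)=0$, $\dot u(t_1^+)=\dot u(t_2^-)=0$, a unique maximum $m$ at the midpoint, and symmetry inherited from the symmetry of \eqref{E_TWSys}. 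Extend $u$ by zero outside $[t_1,t_2]$: this is the compact solitary wave. One must check it is a TWS in the sense of Definition~\ref{D_TWS}: $u$ is bounded, piecewise analytic, and $\dot u\to 0$ at $t_1,t_2$, so $u\in H^1_{loc}(\R)$ and $\dot u$ is continuous across $t_1,t_2$ (hence $u\in\C^1(\R)$); since $u\equiv 0$ trivially solves \eqref{E_TWeq}, and $u$ solves it classically on $(t_1,t_2)$, the weak formulation \eqref{E_weak} holds — here the key is that both $u$ and $\dot u$ match continuously at the junction so no distributional mass appears. Because \eqref{E_singTW} holds with $h=h_0$ on all of $\R$ (including the junctions, where both sides equal $h_0$ in the limit and $u=0$ is an honest solution there by Lemma~\ref{L_uniq2}(b)), $u$ is a \emph{strong} singular TWS.

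For the composite waves I would invoke the non-uniqueness at $u=0$ (Lemma~\ref{L_uniq2}(b)): since $v_{h_0}^{\pm}$ fails to be Lipschitz at $0$, after the solution reaches $u=0$ with zero slope at time $t_2$ it may either continue as the zero solution or immediately restart another bump. Thus, given any collection of non-overlapping translates $I_k=(t_1-a_k(t_2-t_1),\,t_2-a_k(t_2-t_1))$, gluing a translated copy of the bump on each $I_k$ and setting $\tilde u\equiv 0$ elsewhere produces a function that is $\C^1(\R)$ (each junction has $\tilde u=0$, $\dot{\tilde u}=0$), piecewise analytic, bounded, in $H^1_{loc}(\R)$, and satisfies \eqref{E_singTW} with $h=h_0$ everywhere; hence it is a strong singular TWS. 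Choosing the $a_k$ to be a finite set gives compact multi-bump solutions; choosing $a_k=k\cdot c$ for $c>1$ and $k\in\Z$ gives periodic composite waves of arbitrary period $c(t_2-t_1)$; choosing an aperiodic infinite set of admissible $a_k$ gives aperiodic non-compact composite waves.

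The main obstacle I expect is the careful verification that these glued functions are genuine weak solutions in the sense of \eqref{E_weak} — that is, that $\C^1$ matching at the junctions is enough to kill all boundary contributions when integrating by parts against a test function, given that $u$ appears quadratically (through $(u^2)_t$ and $(u_t)^2$) and $F'(u)$ is only continuous, not smooth, in $t$ across the junctions. The crux is that $u\in\C^1$ and $\dot u$ is absolutely continuous on each closed subinterval with matching one-sided values $0$ at every junction, so $(u^2)_t=2u\dot u$ is continuous and the distributional derivative $((u^2)_t)_t$ has no atomic part at the $t_k$; this is precisely what distinguishes the compact-support (strong singular) case from the peaked case, where $\dot u$ jumps and one only gets a \emph{weak} singular TWS. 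I would also need to double-check the direction of the endpoint condition in Lemma~\ref{L_finite_time}(ii): at $u=m$ the orbit comes from below with $u$ increasing toward $m$, so the relevant sign is $F'(m)>0$ on the left branch and $F'(m)<0$ would be handled by symmetry; since only $F'(m)\neq 0$ is asserted, a brief remark reconciling this (the roles of the two endpoints, or a reflection) is needed for a fully rigorous argument.
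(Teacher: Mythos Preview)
Your approach is essentially the paper's: use the elementary form (b1) to build the basic bump, invoke Lemma~\ref{L_finite_time} for finite time at both ends, extend by zero via the non-uniqueness in Lemma~\ref{L_uniq2}(b), and then freely translate copies to get composite waves. Your sufficiency argument and the discussion of why the glued function is a strong singular TWS in $\C^1(\R)$ are in fact more carefully spelled out than the paper's.

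There is, however, one circularity in your necessity argument. You deduce $F'(0)=0$ from ``the $\C^1$ requirement'', but $\C^1$ is a \emph{conclusion} of the proposition, not part of the definition of a compact solitary wave (which only asks for a continuous TWS with compact support and a unique extremum). The clean way to obtain $F'(0)=0$ is to test the weak formulation \eqref{E_weak} with $\phi$ supported in the region where $u\equiv 0$: this yields $\int 2F'(0)\,\phi\,dt=0$ for all such $\phi$, hence $F'(0)=0$. Equivalently, $u\equiv 0$ must itself solve \eqref{E_TWeq} on an open set, which forces $F'(0)=0$ (this is exactly the point of the Remark following Corollary~\ref{C_coex}). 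Once $F'(0)=0$ is secured, your limit computation shows $\dot u(t_1^+)=\dot u(t_2^-)=0$ a posteriori, and then $\C^1$ follows.

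On your final worry about the sign of $F'(m)$: no reflection is needed. The hypotheses $F(u)<F(0)=F(m)$ for $u\in(0,m)$ force $F'(m)\ge 0$ (since $F$ approaches $F(m)$ from below as $u\to m^-$), and together with $F'(m)\neq 0$ this gives $F'(m)>0$, so Lemma~\ref{L_finite_time}(ii) applies directly.
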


\begin{proof}
Compact  solitary TWS appear when there exist values  $t\in\R$
such that the solutions of \eqref{E_TWeq} reach $u=0$
with vanishing one-sided derivative. These solutions are associated
to the integral curves of \eqref{E_TWSys} with energy $h=h_0$,
and as before, the branches $(u,v_{h_0}^\pm(u))$ must be defined and
bounded such that they intersect the singular line $\{u=0\}\times
\R$ at $(0,0)$ in finite time. This last fact guarantees that the
corresponding solution of \eqref{E_TWeq} reaches $u=0$ with
vanishing derivative in finite time.

As studied in the case (b) of Section \ref{S_ElemForms},  the existence and boundedness  of such curves is guaranteed if there
exists $m>0$ such that $F(u)<F(0)$ for $u\in(0,m)$ and $F(m)=F(0)$.
The intersection of the curves $(u,v_{h_0}^\pm(u))$ with the
singular line $\{u=0\}\times \R$ at $(0,0)$ is ensured if $F'(0)=0$, since in
this case
\begin{equation}\label{E_tozero}
\lim\limits_{u\rightarrow 0^+} v_{h_0}^\pm(u)=
\lim\limits_{u\rightarrow 0^+}
\pm\sqrt{\frac{-2\,F''(u)u^2+o(u^3)}{u}}=0.
\end{equation}
By Lemma \ref{L_finite_time} (i) and (ii), the orbits of
\eqref{E_TWSys} connect $(0,0)$ with $(m,0)$  in finite time if
$F''(0)<0$ and $F'(m)>0$. Under these conditions there exist $t_1,
t_2\in\R$ such that the first component of the solution of system
\eqref{E_TWSys}, $u(t)$, is a strong solution of \eqref{E_TWeq} (and
also \eqref{E_singTW} with $h=h_0$) in the maximal interval of
definition $t\in(t_1,t_2)$. Moreover, $\lim_{t\rightarrow t_1^+}u(t)
= 0$ and by symmetry $
     \lim_{t\rightarrow t_2^-}u(t) = 0$ and
 $u\left(\frac{t_1+t_2}{2}\right)=m$ is the unique maximum. This solution $u(t)$ is
 the elementary form that appears in the case (b1) of Section
 \ref{S_ElemForms}.

In particular, the above conditions imply that equation
\eqref{E_TWeq} is not unique at $u=0$, and that
$u(t)\equiv 0$ is also a solution. Therefore it is possible to obtain a continuous continuation of $u(t)$ with compact support in $\R$. We define the
function
$$
\tilde{u}(t)=\left\{
       \begin{array}{ll}
         u(t) & \mbox{for } t\in (t_1,t_2), \\
         0 & \mbox{for } t\in \R\setminus(t_1,t_2).
       \end{array}
     \right.
$$
 Observe that
$\tilde{u}(t)\in H^1_{loc}(\R)$ and it is analytic in $\R\setminus
\{t_1,t_2\}$. Furthermore,  $\lim_{t\rightarrow t_1^+}\dot{u}(t) =
0$ and  $\lim_{t\rightarrow t_2^-}\dot{u}(t) = 0$ in view of
\eqref{E_tozero}.  Hence $\tilde{u}(t)\in\mathcal{C}^1(\R)$, and by
construction it is a strong singular TWS of \eqref{E_TWeq}. In a
similar way as above, we can  also construct the following composite
waves: multi-bump waves with  compact support by gluing together a
finite number of copies of $u(t)$, and waves with non-compact
support that can be either aperiodic or periodic with arbitrary
period.
All these solutions are strong singular TWS of \eqref{E_TWeq}, they
are piecewise analytic and at most $\mathcal{C}^1(\R)$,~cf. Remark
\ref{R_cpsupp} below.

The sufficiency part of the proof follows from the fact that if $F$
satisfies the stated conditions, then it is possible to find the
desired solutions following the above construction.~\end{proof}

As a direct consequence of Propositions \ref{P_peaked} and
\ref{P_cpsupp}, we obtain that it is not possible to find a peaked
wave with compact support.

\begin{Corollary} \label{C_coex}
The equation  \eqref{E_TWeq} cannot admit peaked and compactly
supported  TWS for the same $F$. In particular, a peaked TWS can not
have compact support, and conversely,
 a TWS with compact support can not have peaks.
\end{Corollary}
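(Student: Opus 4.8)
The plan is to derive Corollary~\ref{C_coex} directly from the necessary-and-sufficient conditions in Propositions~\ref{P_peaked} and \ref{P_cpsupp}, arguing by contradiction. Suppose for contradiction that there is an analytic function $F$ for which equation \eqref{E_TWeq} admits both a peaked TWS and a compactly supported TWS. By Proposition~\ref{P_peaked}, the existence of a peaked TWS forces $F'(0)<0$. On the other hand, by Proposition~\ref{P_cpsupp}, the existence of a compact solitary TWS (or any of its associated composite waves) forces $F'(0)=0$ (and in addition $F''(0)<0$). These two requirements on $F'(0)$ are mutually exclusive, which is the desired contradiction. Hence no single $F$ can yield both types of solution.

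From this the two more specific assertions follow immediately. For the first, suppose a given TWS $u(t)$ is peaked; then the $F$ associated with its underlying equation satisfies $F'(0)<0$ by Proposition~\ref{P_peaked}, so by the previous paragraph $F$ cannot also admit compactly supported solutions, and in particular $u$ itself cannot have compact support (a peaked solution has, by definition, a nonzero one-sided derivative at each peak $t_k$, whereas at the endpoints of a compact support the one-sided derivative must vanish, so the two defining behaviours are in any case incompatible at the level of a single function). Conversely, if $u$ has compact support then the relevant $F$ satisfies $F'(0)=0$, so it cannot admit peaked solutions, and in particular $u$ cannot have peaks.

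Since the whole argument is simply a juxtaposition of the conditions already established, there is essentially no technical obstacle; the only point requiring a word of care is to make explicit that ``the same $F$'' is what couples the two propositions — i.e. that both a peaked wave and a compact wave, if they arose from \eqref{E_TWeq}, would be governed by one and the same analytic $F$, so that the contradictory constraints $F'(0)<0$ and $F'(0)=0$ are forced on the same quantity. One should also note that the hypothesis $u(h-F(u))>0$ near $0$ used in Lemma~\ref{L_uniq2} is subsumed in both propositions (it is exactly the condition $F(u)<F(0)$ for $u\in(0,m)$ together with $u>0$), so no extra assumptions are needed. I expect the ``proof'' to be only two or three sentences long, exactly as written in the excerpt.
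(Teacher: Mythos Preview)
Your proposal is correct and matches the paper's approach exactly: the paper presents the corollary as a direct consequence of Propositions~\ref{P_peaked} and~\ref{P_cpsupp} without further argument, and your contradiction between $F'(0)<0$ and $F'(0)=0$ is precisely the intended reasoning. The additional remarks you make (about the pointwise incompatibility of peaks and compact-support endpoints, and about Lemma~\ref{L_uniq2}) are unnecessary elaborations but do no harm.
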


\begin{remark}\label{R_cpsupp}
In view of Lemma \ref{L_finite_time}, the conditions of Proposition
\ref{P_cpsupp} guarantee the existence of a homoclinic orbit of
system \eqref{E_TWSys} where every point on the orbit is connected to $(0,0)$ in a finite time. They
also account for the fact that the compactly supported solutions are
at most $\C^1$. Indeed, notice that under the conditions of Proposition \ref{P_cpsupp} we have that

\begin{equation*}
 \dot u(t) = v_{h_0}^\pm= \pm \sqrt{-2F''(u) u+o(u^2)}
\end{equation*}
and
\begin{align*}
 \ddot u(t)
         &= \pm \frac{-2 F^{(3)}(u)u\dot u-2F''(u)\dot u + o(u)\dot u}{2(\pm \dot u)}\\
         &= -F^{(3)}(u)u- F''(u)+o(u).
\end{align*}
Therefore,
\begin{align}
\label{E_limusegunda}
 \lim_{t\rightarrow t_1^-}\ddot u(t) &= -F''(u) \neq 0 =  \lim_{t\rightarrow t_1^+}\ddot u(t) \notag\\
 \lim_{t\rightarrow t_2^+}\ddot u(t) &= -F''(u) \neq 0 =  \lim_{t\rightarrow t_1^-}\ddot u(t) ,
\end{align}
and hence the continuation to $u(t) \equiv 0$ is not $\C^2$ since $F''(u) <0$.
If we were to demand a $\C^2$-continuation, this would
necessarily require the second derivative of $F$ to vanish.
 As a consequence, the solutions would
loose the compactness property, since in view of Lemma
\ref{L_finite_time} the existence time of the loop would become
infinite.
\end{remark}

\begin{remark}
Observe that a different kind of composition of locally defined solutions may also be obtained under conditions stated in Proposition \ref{P_cpsupp} but with $F'(0)<0$. In this case
 $$ \lim\limits_{u\rightarrow 0^+} v_{h_0}^\pm(u)=
\lim\limits_{u\rightarrow 0^+}
\pm\sqrt{\frac{-2\,F'(u)u+o(u^2)}{u}}=\pm\sqrt{-2F'(0)}=:\pm a\neq
0,
$$
and hence $\lim_{t\rightarrow t_1^+}\dot{u}(t) = a$ and
$\lim_{t\rightarrow t_2^-}\dot{u}(t) = -a$. However, $u(t)\equiv 0$
is not a solution of \eqref{E_TWeq} under this assumption on $F$ and
therefore the only possible continuous continuation is given by the
periodic function \eqref{E_Periodicpeaked}. But this is not a composition of a
 compact solitary wave, i.e~not a composite wave as defined at the beginning of this section.
\end{remark}

\subsection{Fronts with finite-time decay and plateau-shaped waves}\label{S_NovesC1}

\begin{Proposition}\label{P_c1}
The equation  \eqref{E_TWeq} has  fronts, solitary and
plateau-shaped singular TWS solutions  if and only if
\begin{itemize}
\item  $F'(0)=0$  and  $F''(0)<0$, and
\item   there exists $m>0$ such that $F(m)=F(0)$, $F(u)<0$ for $u\in
(0,m)$ and $F'(m)= 0$.
\end{itemize}
The solitary waves are strong solutions of \eqref{E_TWeq} and
therefore analytic. The fronts and plateau-shaped waves are strong
singular TWS which are at most $\C^1$ and piecewise analytic.
Moreover, the fronts have finite-time decay on one side.
\end{Proposition}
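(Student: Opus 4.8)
The plan is to mirror the structure of the proof of Proposition \ref{P_cpsupp}, using the elementary forms of case (b2) in Section \ref{S_ElemForms} instead of those of case (b1). First I would argue necessity: singular TWS built by reaching the singular line $\{u=0\}$ require energy $h=h_0$, and in order to obtain a $\C^1$ gluing (rather than a peak) the branches $(u,v_{h_0}^\pm(u))$ must meet $\{u=0\}$ at the single point $(0,0)$, which by the computation \eqref{E_tozero} forces $F'(0)=0$; boundedness of the orbit requires an $m>0$ with $F(m)=F(0)$ and $F(u)<F(0)$ on $(0,m)$; convergence of the time integral to reach $u=0$ requires $F''(0)<0$ by Lemma \ref{L_finite_time}(i); and the defining feature that distinguishes this case from Proposition \ref{P_cpsupp} is $F'(m)=0$, so that $p=(m,0)$ is a critical point of \eqref{E_TWSys}. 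Under these hypotheses Lemma \ref{L_finite_time}(ii) tells us the orbit takes \emph{infinite} time to reach $p$, which is exactly what produces the front behaviour rather than a compact bump.

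Next I would carry out the construction. By case (b2), there are two heteroclinic orbits of energy $h_0$ joining $(0,0)$ to $p=(m,0)$, one along $v_{h_0}^+$ and one along $v_{h_0}^-$, yielding strong solutions $u_+(t)$ on $(t_1,\infty)$ with $u_+(t_1^+)=\dot u_+(t_1^+)=0$, $u_+(\infty)=m$, and $u_-(t)$ on $(-\infty,t_2)$ with $u_-(t_2^-)=\dot u_-(t_2^-)=0$, $u_-(-\infty)=m$. A \emph{front with finite-time decay} is obtained by extending $u_+$ (or $u_-$) by the constant solution $u\equiv 0$ across $t_1$ (resp.\ $t_2$): this is legitimate because, as in Proposition \ref{P_cpsupp}, the hypotheses $F'(0)=0$, $F(0)=h_0$ make $u\equiv 0$ a solution of \eqref{E_singTW} and the matching of value and first derivative at $t_1$ (resp.\ $t_2$) is exactly \eqref{E_tozero}. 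A \emph{solitary wave} is obtained by choosing $t_1=t_2$ and gluing $u_-$ to $u_+$ at that point; since both the value $m$ and all derivatives agree in the limit $t\to\pm\infty$ is replaced here by agreement at a finite matching point where the orbit is regular, this composite is in fact a strong (hence analytic) solution on all of $\R$ — a smooth pulse asymptotic to $0$, decaying to $0$ in finite time on \emph{both} sides. \emph{Plateau-shaped} solutions come from inserting a constant stretch $u\equiv m$ between an increasing front and a decreasing front, using the infinite transit time at the critical point $p$ to justify the $\C^\infty$-matching at height $m$ and the finite-time matching at height $0$. In each case I would verify membership in $H^1_{loc}(\R)$ (boundedness of $u$ and $\dot u$ away from the finitely many corners), the weak-solution property via Definition \ref{D_TWS} using that each piece is a strong solution, and that these are \emph{strong singular} TWS in the sense of Definition \ref{D_singularTWS}(a) — the equation \eqref{E_singTW} holds on all of $\R$ since $u\equiv 0$ and $u\equiv m$ satisfy it — except that the regularity is only $\C^1$: the second-derivative computation, exactly as in Remark \ref{R_cpsupp}, gives $\lim \ddot u = -F''(0)\neq 0$ on the non-constant side against $0$ on the constant side at each corner on the $u=0$ level, so the gluing is not $\C^2$. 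Sufficiency then follows since, given such an $F$, the above recipe produces the claimed solutions.

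The main obstacle I anticipate is the careful bookkeeping at the matching points to pin down the \emph{exact} regularity and to confirm the finite/infinite transit-time dichotomy that separates this proposition from Proposition \ref{P_cpsupp}: one must check that the front genuinely decays to $0$ in finite time (Lemma \ref{L_finite_time}(i) with $F''(0)<0$) while approaching the plateau value $m$ only as $t\to\pm\infty$ (Lemma \ref{L_finite_time}(ii) fails because $F'(m)=0$), and that consequently the constant stretches $u\equiv m$ in a plateau-shaped wave can only be glued \emph{asymptotically}, i.e.\ a genuine plateau of finite length requires two separate fronts meeting the level $m$ from opposite sides over an infinite $t$-interval — so a "plateau" is really the limiting profile, and one should state precisely in what sense it is realized. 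The remaining steps (the $H^1_{loc}$ bounds, the distributional identity \eqref{E_weak}, and the $\C^1$-but-not-$\C^2$ claim) are routine repetitions of the arguments already used for Propositions \ref{P_peaked} and \ref{P_cpsupp}.
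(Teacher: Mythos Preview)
Your necessity argument and the construction of the fronts are fine and match the paper's approach, but your construction of the plateau-shaped waves is based on a genuine misconception. You propose to insert a constant stretch $u\equiv m$ between an increasing and a decreasing front. This cannot work: since $F'(m)=0$, Lemma~\ref{L_finite_time}(ii) tells you that the elementary forms $u_\pm$ approach the level $m$ only as $t\to\pm\infty$, so there is no finite time at which you could glue a segment $u\equiv m$. You yourself flag this difficulty at the end of your proposal, but you do not resolve it. The paper's plateau is at the \emph{other} level: one glues $u_-$ (which decays from $m$ and reaches $0$ at some finite time $t_2$), then a stretch $u\equiv 0$ on $[t_2,t_1]$, then $u_+$ (which leaves $0$ at $t_1$ and tends to $m$). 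This works precisely because the elementary forms reach $u=0$ in finite time (Lemma~\ref{L_finite_time}(i) with $F''(0)<0$) and because the non-uniqueness of Lemma~\ref{L_uniq2} at $u=0$ permits the gluing with the constant solution $u\equiv 0$. The resulting profile tends to $m$ at both ends and has a flat trough at level $0$ of arbitrary finite length; the $\C^1$-but-not-$\C^2$ corners sit at the two contact points with $u=0$, exactly as in Remark~\ref{R_cpsupp}.

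Relatedly, your description of the solitary wave is off: gluing $u_-$ to $u_+$ at $t_1=t_2$ produces a wave that tends to $m$ at $\pm\infty$ and has a unique \emph{minimum} $0$ at $t_1$, not a pulse ``asymptotic to $0$'' that ``decays to $0$ in finite time on both sides''. At that minimum both one-sided second derivatives equal $-F''(0)$ by the computation in Remark~\ref{R_cpsupp}, so the gluing is actually $\C^2$ (indeed analytic, as the paper states), whereas the fronts and plateau-shaped waves are only $\C^1$ at the points where they meet $u\equiv 0$.
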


\begin{proof}
Under the present hypothesis, there exist orbits of system
\eqref{E_TWSys} corresponding to the branches $(u,v^{\pm}_{h_0}(u))$,
which connect $(0,0)$ with the critical point $(m,0)$. By  Lemma
\ref{L_finite_time}, any point on these orbits is connected to $(0,0)$
in finite time and to $(m,0)$ in infinite time. They
yield two strong solutions $u_{\pm}(t)$ with maximal interval of
definition $(t_1,\infty)$ and $(-\infty,t_2)$, respectively, which
are given by the elementary forms of the case (b2) of Section
\ref{S_ElemForms}. By gluing them together with the solution
$u(t)\equiv 0$ we get the following solutions: fronts given by
$$
 \tilde{u}_-(t)=\left\{
       \begin{array}{ll}
       u_-(t) & \mbox{for } t\in (-\infty,t_1),\\
         0 & \mbox{for } t\in [t_1,\infty),
       \end{array}
     \right.  \mbox{  and  }\,
 \tilde{u}_+(t)=\left\{
       \begin{array}{ll}
         0 & \mbox{for } t\in (-\infty,t_2],\\
         u_+(t) & \mbox{for } t\in (t_2,\infty).
       \end{array}
     \right.
$$
Setting $t_1=t_2$ we get a solitary wave  given by
$$
u(t)=\left\{
       \begin{array}{ll}
         u_-(t) & \mbox{for } t\in (-\infty,t_1),\\
         0 & \mbox{for } t=t_1,\\
         u_+(t) & \mbox{for } t\in (t_1,\infty),
       \end{array}
     \right.
$$
which is symmetric with respect to its unique maximum.
Choosing $t_1<t_2$ we get a plateau-shaped solitary wave given by
$$
u(t)=\left\{
       \begin{array}{ll}
         u_-(t) & \mbox{for } t\in (-\infty,t_2),\\
         0 & \mbox{for } t\in [t_2,t_1],\\
         u_+(t) & \mbox{for } t\in (t_1,\infty),
       \end{array}
     \right.
$$
The front and plateau-shaped solutions are analytic in
$\R\setminus\{t_i\}$ and satisfy equation \eqref{E_singTW} for
$h=h_0$, hence they are strong singular TWS of \eqref{E_TWeq}.
Furthermore, they are at most  $\mathcal{C}^1(\R)$ for the same
reason explained in Remark \ref{R_cpsupp}. The solitary wave
solutions, however, are in fact strong solutions of \eqref{E_TWeq}
on $\R$ since $\ddot u(t)$ is defined everywhere,
cf.~\eqref{E_limusegunda}, and therefore they are analytic on $\R$.
~\end{proof}

We emphasize that the fronts described above are not the classical smooth fronts with exponential decay on both ends, but they decay in finite time on one end.

%

\subsection{Cusped waves}\label{S_cusped}

A TWS  of  \eqref{E_TWeq} given by a function $u:\R\rightarrow\R$ is called
\emph{cusped}  if it is smooth except at a finite or countable
number of points (\emph{cusps}) $\mathcal{S}=\{t_k\in\R,\, k\in\Z
\}$ where
  \begin{equation*}
        \lim_{t\rightarrow t_k^+} u'(t) = -  \lim_{t\rightarrow t_k^-} u'(t) = \pm
        \infty.
  \end{equation*}

\begin{Proposition}\label{P_cusped} The equation  \eqref{E_TWeq}
has cusped TWS if and only if
 there exists $m>0$ such that $F(m)-F(u)>0$ for all $u\in[0,m)$. These solutions are either
 \begin{enumerate}[(i)]
 \item cusped periodic with period $T=\displaystyle{\frac{2}{\sqrt{2}}\int_0^m\sqrt{\frac{u}{F(m)-F(u)}}du}$ if and only if in addition $F'(m)\neq  0$, or
  \item cusped solitary if and only if in addition
  $F'(m)=0$.
 \end{enumerate}
These solutions are weak singular TWS and they are analytic except
for a discontinuity in the first derivative at the peaks.
Furthermore, the cusped solitary waves are symmetric with respect to
their unique maximum and decay exponentially to zero at infinity.
Cusped periodic solutions have a unique maximum and minimum per
period and are symmetric with respect to these local extrema.

\end{Proposition}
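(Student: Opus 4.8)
The plan is to carry out, almost verbatim, the argument in the proof of Proposition~\ref{P_peaked}, replacing the elementary forms of case~(a) in Section~\ref{S_ElemForms} by those of case~(c) and the energy level $h_0$ by $h_m:=F(m)$. The underlying observation is that a peak occurs when an orbit of \eqref{E_TWSys} meets the singular line $\{u=0\}\times\R$ at a point $(0,\pm a)$ with $0<a<\infty$, while a cusp occurs when it meets $\{u=0\}\times\R$ at $v=\pm\infty$; by \eqref{E_fin-time-2} this still happens in finite time, which is what makes the construction work.

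\emph{Necessity.} Let $u$ be a cusped TWS with a cusp at $t_k$. On each smooth arc abutting $t_k$ the solution satisfies $\dot u=v_h^\pm(u)=\pm\sqrt{2(h-F(u))/u}$ for the energy $h$ of that arc; since $u$ is bounded and $\dot u\to\pm\infty$ at $t_k$, necessarily $u(t_k)=0$ and $h>h_0$ (for $h=h_0$ one has $v_{h_0}^\pm(u)\to\pm\sqrt{-2F'(0)}$ finite, and for $h<h_0$ no orbit with energy $h$ exists near $u=0$). Being bounded and non-constant, the arc leaving $t_k$ increases from $0$ and must turn around, so there is a least $m>0$ with $F(m)=h$ and $F(u)<h$ for $u\in(0,m)$; together with $F(0)<h$ this gives $F(m)-F(u)>0$ for all $u\in[0,m)$, the stated condition. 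The point $(m,0)$ is a regular point of \eqref{E_TWSys} if $F'(m)\neq 0$ and a critical point if $F'(m)=0$; since for $h\neq h_0$ the constant $u\equiv 0$ does not solve \eqref{E_singTW} (Lemma~\ref{L_uniq2}(a)), the only way to continue across a cusp is by reflected copies of the same arc, which forces a periodic wave in the first case and a solitary one in the second, giving the dichotomy.

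\emph{Sufficiency.} Conversely, assume such an $m$ exists and set $h_m=F(m)$, so the branches $(u,v_{h_m}^\pm(u))$ are defined on $(0,m]$, satisfy $v_{h_m}^\pm(u)\to\pm\infty$ as $u\to 0^+$, and by \eqref{E_fin-time-2} reach $\{u=0\}$ in finite time. If $F'(m)\neq 0$, then $(m,0)$ is regular and the two branches join into the elementary form of case~(c1): a strong solution of \eqref{E_TWeq} on a finite interval $(t_1,t_2)$ with $\lim_{t\to t_1^+}u=\lim_{t\to t_2^-}u=0$, $\lim_{t\to t_1^+}\dot u=+\infty$, $\lim_{t\to t_2^-}\dot u=-\infty$, unique maximum $m$ at $(t_1+t_2)/2$, and $t_2-t_1=\tfrac{2}{\sqrt 2}\int_0^m\sqrt{u/(F(m)-F(u))}\,du$, which converges since the integrand is $O(u^{1/2})$ near $u=0$ and $O((m-u)^{-1/2})$ near $u=m$ (as $F'(m)\neq 0$). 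Gluing translated copies along the cusps $t_k=t_1+k(t_2-t_1)$ as in \eqref{E_Periodicpeaked} produces the periodic cusped TWS of period $T=t_2-t_1$. If $F'(m)=0$, then $(m,0)$ is critical and the branch $v_{h_m}^+$ yields the elementary form of case~(c2): a strong solution $u_+$ on $(t_1,\infty)$ with $\lim_{t\to t_1^+}u_+=0$, $\lim_{t\to t_1^+}\dot u_+=+\infty$ and $u_+(t)\to m$; symmetrically one obtains $u_-$ on $(-\infty,t_2)$, and taking $t_2=t_1$ and gluing as in Proposition~\ref{P_peaked}(ii) gives the solitary cusped TWS, symmetric about its cusp, with $u(t)\to m$ exponentially as $t\to\pm\infty$ because $(m,0)$ is a hyperbolic critical point of \eqref{E_TWSys}.

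\emph{Regularity.} It remains to check these glued functions are weak singular TWS. Near a cusp $t_k$ one has $\dot u=v_{h_m}^\pm(u)\sim\pm\sqrt{2(h_m-F(0))}\,u^{-1/2}$, which on integration gives $u\sim C|t-t_k|^{2/3}$; hence $\dot u^2\sim|t-t_k|^{-2/3}\in L^1_{loc}$, so $u\in H^1_{loc}(\R)$, while $(u^2)_t\sim|t-t_k|^{1/3}$ is continuous, so $u^2\in\C^1(\R)$. Since $u$ solves \eqref{E_TWeq} classically on each smooth arc and $u^2\in\C^1$, integrating \eqref{E_weak} arc by arc and summing leaves no boundary contributions at the cusps, so $u$ is a TWS in the sense of Definition~\ref{D_TWS}; moreover $\tfrac{u\dot u^2}{2}+F(u)=(h_m-F(u))+F(u)\to h_m$ at each cusp, so \eqref{E_weaksingular} holds and, by Definition~\ref{D_singularTWS}, $u$ is a weak singular TWS, analytic off $\mathcal{S}$. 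The hardest point is precisely this last step: since $\dot u$ is unbounded at the cusps one must control its blow-up rate so that the cusps carry no distributional mass, but this reduces to the elementary estimate $u\sim C|t-t_k|^{2/3}$; the rest is a faithful transcription of the peaked case with $h_0$ replaced by $h_m$ and case~(a) by case~(c).
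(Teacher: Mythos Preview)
Your proof is correct and follows essentially the same route as the paper's: both identify cusps with the elementary forms of case~(c) in Section~\ref{S_ElemForms} at energy $h_m=F(m)>h_0$, construct the periodic (resp.\ solitary) wave by gluing copies of the form~(c1) (resp.\ (c2)) exactly as in the peaked case, and then verify $H^1_{loc}$ regularity together with the weak-singular condition~\eqref{E_weaksingular}. The only notable difference is in the $H^1_{loc}$ check: you derive the local expansion $u\sim C|t-t_k|^{2/3}$ near a cusp and read off $\dot u^2\sim|t-t_k|^{-2/3}\in L^1_{loc}$, whereas the paper changes variables to show directly that $\int_{t_1}^{t_1+\varepsilon}\dot u^2\,dt=\int_0^{u_\varepsilon}\sqrt{2(F(m)-F(u))/u}\,du<\infty$ via the convergence criterion of Lemma~\ref{L_finite_time}---these are equivalent encodings of the same asymptotics.
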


\begin{proof}

Cusped TWS correspond to orbits of \eqref{E_TWSys} which are defined
by the curves  $(u,v_h^{\pm}(u))$  with $h\neq h_0$ satisfying
$$
\lim\limits_{u\rightarrow 0^+} v_h^\pm(u)= \lim\limits_{u\rightarrow
0^+} \pm\sqrt{2\,\frac{h-F(u)}{u}}=\pm \infty.
$$
Therefore, a necessary condition for the appearance of cusps is that
there exists $m>0$ such that $F(m)-F(u)>0$ for $u\in[0,m)$. In
particular, this implies that $h_m:=F(m)>F(0)$. As shown in the case
(c) of Section \ref{S_ElemForms} the  orbits
 of \eqref{E_TWSys} passing through the points
 $(\bar{u},v_{h_m}^\pm(\bar{u}))$ are unbounded
in the component $v$ but approach the singular line $u=0$ at infinity in
a finite time.  We will see in the construction of solutions in the
proofs of (i) and (ii) below that the necessary condition deduced
above is also sufficient for the existence of cusped TWS.

(i) If $F'(m)\neq 0$ then the point $(m,0)$ connecting the two
branches $(u,v_{h_m}^\pm(u))$ is regular, so by equation
\eqref{E_fin-time-2} there exist $t_1,t_2 < \infty$ such that the
first component of the solution of
     system  \eqref{E_TWSys}  $u(t)$, is a strong solution of \eqref{E_TWeq} (and also of
     \eqref{E_singTW} with $h=h_m$), with maximal interval of definition
      $t\in(t_1,t_2)$, which is given by the elementary form that appears in case (c1)
     of Section \ref{S_ElemForms}.
Notice that  $u(t)\equiv 0$ is not a solution of
     \eqref{E_singTW} for $h=h_m>0$.
     Therefore, the only possible continuous
     continuation of $u(t)$ preserving the energy is given by
     the periodic function
$$
\tilde{u}(t)=\left\{
       \begin{array}{ll}
         u\left(t-(k-1)(t_2-t_1)\right) & \mbox{for } t\in (t_k,t_{k+1}), \\
         0 & \mbox{for } t=t_k,
       \end{array}
     \right.
$$
where $t_k=t_1+k(t_2-t_1)$ for $k\in \Z$, which is periodic with period
$$T=t_2-t_1=\displaystyle{\frac{2}{\sqrt{2}}\int_0^m\sqrt{\frac{u}{F(m)-F(u)}}du}.$$
Observe now that integrating system \eqref{E_TWSys}, and denoting
$u_\varepsilon:=u(t_1+\varepsilon)$ we get,
$$
\int_{t_1}^{t_1+\varepsilon} \dot{u}^2dt=\int_{0}^{u_\varepsilon}
v^2 \frac{du}{v}=\int_{0}^{u_\varepsilon}
\sqrt{\frac{2(F(m)-F(u))}{u}}du.$$ Since $F(m)-F(u)>0$ for all
$u\in[0,u_\varepsilon]$ and recalling the criterion used in Lemma \ref{L_finite_time}, we find that  $\lim_{u\rightarrow 0^+}
\sqrt{{(F(m)-F(u))}/{u}}\cdot \sqrt{u}=A$ with $A\neq 0$ and $A\neq
\infty$. Therefore,
$$
\int_{0}^{u_\varepsilon} \sqrt{\frac{2(F(m)-F(u))}{u}}du <\infty,
$$ and hence $\int_{t_1}^{t_1+\varepsilon} \dot{u}^2dt<\infty$.
Analogously $\int_{t_2-\varepsilon}^{t_2} \dot{u}^2dt<\infty$ and
hence $\int_{t_k-\varepsilon}^{t_k+\varepsilon}
(\dot{\tilde{u}})^2dt$ is convergent for every $t_k$ so that for
each compact $K\subset \R$ we have
$$
\int_K (\dot{\tilde{u}})^2dt<\infty.$$ Since $\tilde{u}(t)$ is
bounded as well, this solution is  in $H^1_{loc}(\R)$, and  analytic on
$\R\setminus \mathcal{S}$ where
$\mathcal{S}=\{t_k\in\R,\, k\in\Z \}$. Finally observe that
$\tilde{u}(t)$  is a  weak singular TWS of  \eqref{E_TWeq},
because
$$
\lim\limits_{t\rightarrow t_k} \frac{u (\dot{u})^2}{2}+F(u)= u\,
\frac{F(m)-F(u)}{u}+F(u)=F(m)=h_m.
$$

 (ii) If $F'(m)=0$ then the orbits corresponding to
$(u,v_{h_m}^\pm)$  approach the singular line $\{u=0\}$ in finite time, but
it takes an infinite time to reach $(m,0)$. So there exist two  strong solutions of \eqref{E_TWeq}  $u_\pm(t)$ given by the elementary forms of the case
(c2) of Section \ref{S_ElemForms}, defined on the maximal intervals
$(t_1,+\infty)$ and $(-\infty,t_2)$, respectively. The only way to
construct a continuous continuation in $\R$ preserving the energy is
by choosing $t_2=t_1$ and gluing together the corresponding
solutions $u_{\pm}$. These considerations lead us to define the
function
$$
\tilde {u}(t)=\left\{
       \begin{array}{ll}
         u_-(t) & \mbox{for } t\in (-\infty,t_1), \\
         0& \mbox{for } t=t_1\\
         u_+(t) & \mbox{for } t\in (t_1,+\infty),
       \end{array}
     \right.
$$
which is a cusped  solitary TWS defined in $\R$. The same
arguments as in the proof of statement (a) show that $\tilde{u}$ is
a weak singular TWS of  \eqref{E_TWeq}. ~\end{proof}

\subsection{Exhaustivity of the characterization}\label{S_exhaustivity}

Observe that the elementary forms presented in Section
\ref{S_ElemForms} capture all the strong solutions of equation
\eqref{E_TWeq} reaching or tending to $\{u=0\}$, whose maximal
interval of definition is not $\R$. In addition, the singular
solutions described in the preceding Section
\ref{S_STW} cover all possible continuous
extensions to $\R$ on the same energy level, using these elementary forms and the constant function $u(t)\equiv 0$
whenever it is a solution. In
consequence, our characterization of singular TWS
for equation \eqref{E_TWeq} given in Propositions
\ref{P_peaked}, \ref{P_cpsupp}, \ref{P_c1}  and  \ref{P_cusped} is exhaustive.

\section{Application to shallow water equations}\label{S_bif_approach}
The aim of this section is to demonstrate the applicability of the
propositions developed in the preceding sections. We exemplify
our approach by studying the equation
for surface waves of moderate amplitude in shallow water and the Camassa-Holm equation. In particular, we
show how the different types of singular TWS can be obtained varying
the energy of the corresponding Hamiltonian systems. This approach may be applied to study singular TWS of a variety of other equations, for example a class of
nonlinear wave equations related to the inviscid Burgers' equation
and Camassa-Holm equation studied in \cite{Lenells2006}, the family of equations analyzed in \cite{LiOlv}, and a generalization of the Camassa-Holm equation studied in \cite{Liu2004}. In the latter paper, the authors conclude with a conjecture on the non-existence of peaked solitary solutions when a certain parameter becomes non-positive. In view of the results in Section \ref{S_STW} we are able to give an affirmative answer.

\subsection{Surface waves of moderate amplitude in shallow water}
\label{S_MASE}
In this section we study singular TWS of the equation for surface waves of moderate amplitude in shallow water,
\begin{align}
  \label{E_MASE}
  u_t + u_x + 6u u_x - 6u^2u_x + 12u^3u_x+
      u_{xxx} -u_{xxt}  + 14uu_{xxx} + 28u_xu_{xx} = 0,
\end{align}
which was first derived by Johnson \cite{Joh02}, whose considerations were extended by Constantin and Lannes \cite{ConLan09}. We refer to \cite{Gey12c} for a first study of smooth solitary waves and to \cite{Gasull2014} for a more extensive characterization of TWS of equation \eqref{E_MASE}.
We  introduce the traveling wave Ansatz  $u(x,t)=u(x-c\,t)$ and integrate  once to obtain
  \begin{equation}
  \label{E_MASEODE}
         u''\Big(u+\frac{1+c}{14}\Big) +\frac{1}{2}(u')^2
      +  K + (1-c) u +3u^2-2u^3 +3u^4 = 0,
\end{equation}
for some constant $K\in\R$. Notice that after the change of
variables $u \mapsto u -\frac{1+c}{14}$ the above equation is of the
form \eqref{E_TWeq}, with $F$ a suitable polynomial in $u$ depending on the  parameters $c$ and $K$.
The relation between the parameters and the
qualitative properties of $F$  is studied in
detail in \cite{Gasull2014}. In particular, it is observed that $F$ has either no extremum or there are two extrema which we denote by $p_1$ (the local maximum) and $p_2$ (the local minimum of $F$) such that $p_1<p_2$. Let  $h_i=F(p_i)$ for $i=1,2$.
We distinguish different cases
depending on the position of $p_1$ and $p_2$ with respect to $u=0$, and the sign of $h_0-h_2$.
Taking into account these cases and the characterization given in Propositions \ref{P_peaked}, \ref{P_cpsupp} and  \ref{P_cusped}  we obtain the following types of bounded singular TWS for equation \eqref{E_MASE} varying the energy. We point out that, as a consequence of Corollary \ref{C_coex},  compact solitary waves and peaked waves cannot coexist within a single case.   To give an example, Figure \ref{Fig_FMASE1} shows the different TWS of equation \eqref{E_MASE} that appear for different energy levels when $ 0<p_1<p_2$ and $h_2>h_0$.

  \vspace{1em}

 \hspace{-1.7em}
\begin{minipage}{0.5 \textwidth}
  \begin{tabular}{| l || c | }
    \hline
Energy/Case                & $ 0<p_1<p_2$ and $h_2>h_0$                                \\\hline\hline
$h>h_1$      & cusped periodic          \\ \hline
$h=h_1$      & cusped \& smooth solitary      \\\hline
$h_1>h>h_2$ & cusped \& smooth periodic \\ \hline
$h=h_2$         & cusped periodic \& constant    \\ \hline
$h_2>h>h_0$ & cusped periodic  \\ \hline
$h\leq h_0$    &                  \\ \hline
  \end{tabular}
\end{minipage}
\begin{minipage}{0.5\textwidth}
   \begin{tabular}{| l || c |}
    \hline
           Energy/Case                        &  $ 0<p_1<p_2$ and $h_2<h_0$  \\ \hline\hline
     $h>h_1$              & cusped periodic  \\ \hline
     $h=h_1$              & cusped \& smooth solitary \\  \hline
     $h_1>h>h_0$  &  cusped \& smooth periodic \\  \hline
     $h_0\geq h> h_2$             & smooth periodic \\ \hline
     $h=h_2$         & constant \\   \hline
     $h \leq h_2$  &       \\   \hline
  \end{tabular}
  \end{minipage}

  \vspace{1em}
  \hspace{-2.2em}
\begin{minipage}{0.535\textwidth}
 \begin{tabular}{| l || c |}
    \hline
           Energy/Case                   &  $ 0<p_1<p_2$ and $h_2=h_0$   \\ \hline\hline
    $h>h_1$  & cusped periodic  \\ \hline
     $h=h_1$       & cusped \& smooth solitary   \\  \hline
        $h_1>h>h_0=h_2$ & cusped \& smooth periodic \\\hline
     $h = h_0$         & constant     \\ \hline
     $h < h_0$  &        \\   \hline
  \end{tabular}
  \end{minipage}
  \begin{minipage}{0.5\textwidth}
   \begin{tabular}{| l || c |}
    \hline
           Energy/Case                        &   $ 0=p_1<p_2$ \\ \hline\hline
     $h>h_1$              & cusped periodic  \\ \hline
     $h=h_1=h_0$              &  compact  solitary \& composite \\  \hline
     $h_1>h>h_2$  & smooth periodic \\  \hline
     $h=h_2$         & constant \\   \hline
     $h < h_2$  &      \\   \hline
  \end{tabular}
  \end{minipage}

  \vspace{1em}

  \begin{minipage}{0.5\textwidth}
 \begin{tabular}{| l || c | c |  c | }
    \hline
           Energy/Case                        &   $ p_1<0<p_2$   &   $ p_1<p_2=0$   &   $ p_1<p_2<0$   \\ \hline\hline
     $h>h_0$              & cusped periodic    & cusped periodic    & cusped periodic  \\ \hline
     $h=h_0$              & peaked periodic & constant & \\  \hline
     $h_0>h>h_2$     &  smooth periodic & & \\  \hline
     $h=h_2$             & constant & &\\ \hline
     $h < h_2$  & &  &   \\   \hline
  \end{tabular}
  \end{minipage}

  \begin{figure}[ht!]
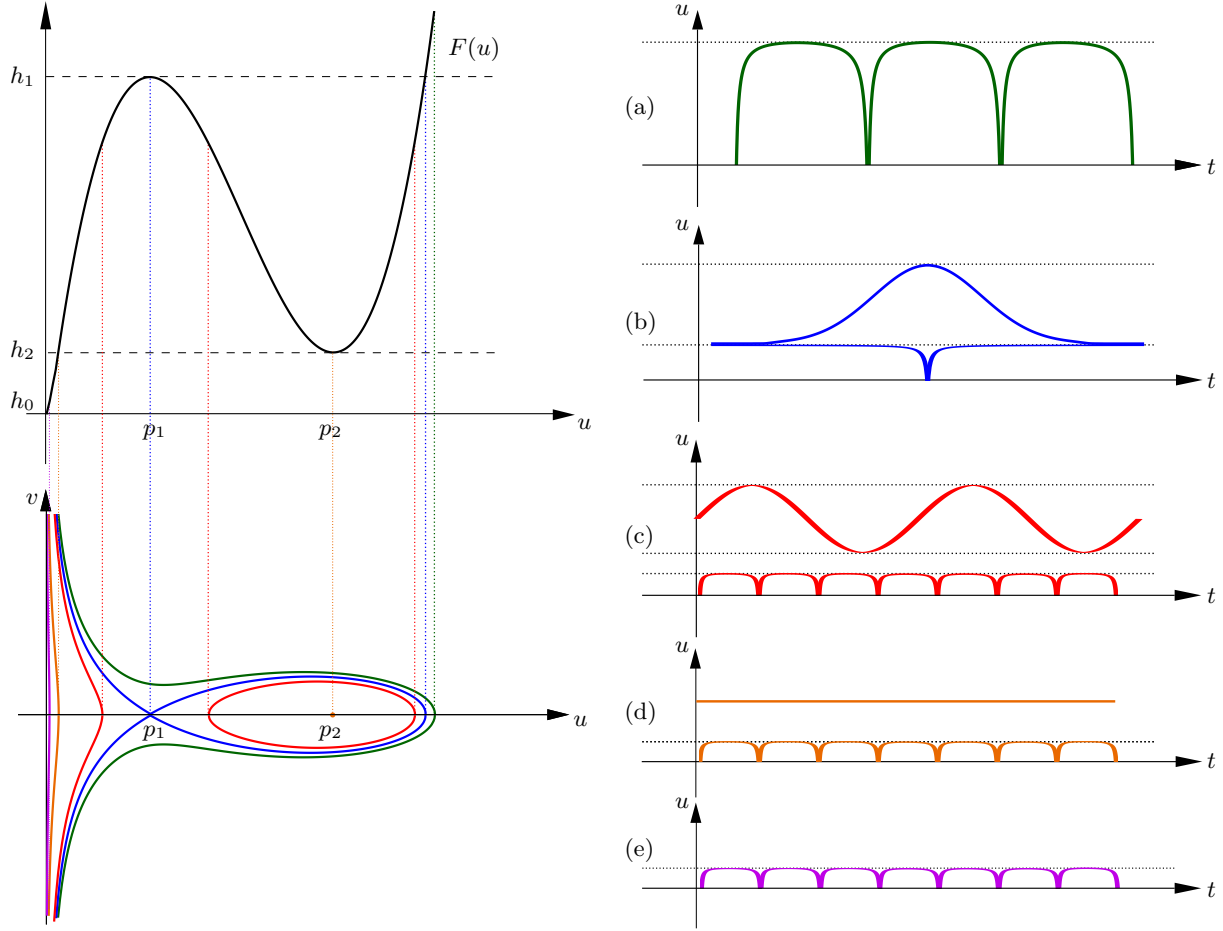

\footnotesize
  \centering
    \begin{lpic}[l(0mm),r(0mm),t(-5mm),b(0mm)]{FigureMASE(0.52)}
    \lbl[l]{-2,135;$h_0$}
    \lbl[l]{-2,148;$h_2$}
    \lbl[l]{-2,218;$h_1$}

    \lbl[l]{32,127;$p_1$}
    \lbl[l]{77,127;$p_2$}

    \lbl[l]{143,129;$u$}
    \lbl[l]{110,225;$F(u)$}

    \lbl[l]{32,50;$p_1$}
    \lbl[l]{77,50;$p_2$}

    \lbl[l]{142,53;$u$}
    \lbl[l]{2,110;$v$}

    \lbl[l]{168,235;$u$}
    \lbl[l]{304,195;$t$}
    \lbl[l]{155,210;(a)}

    \lbl[l]{168,180;$u$}
    \lbl[l]{304,140;$t$}
    \lbl[l]{155,155;(b)}

    \lbl[l]{168,125;$u$}
    \lbl[l]{304,85;$t$}
    \lbl[l]{155,100;(c)}

    \lbl[l]{168,72;$u$}
    \lbl[l]{304,43;$t$}
    \lbl[l]{155,55;(d)}

    \lbl[l]{168,32;$u$}
    \lbl[l]{304,10;$t$}
    \lbl[l]{155,20;(e)}
    \end{lpic}
\caption{Singular TWS of equation \eqref{E_MASE} varying the energy level $h$ in the case $ 0<p_1<p_2$ and $h_2>h_0$. (a)  cusped periodic waves for $h>h_1$; (b) cusped and smooth solitary waves  for $h=h_1$; (c) cusped and smooth periodic waves for $h_1>h>h_2$; (d)  cusped periodic waves and constant solutions for  $h=h_2$; (e) cusped periodic waves for $h_2>h>h_0$.
}
\label{Fig_FMASE1}
\end{figure}

\subsection{The Camassa-Holm Equation}
\label{S_CH}
In the present section we will study singular TWS of  the Camassa-Holm equation (CH)
\begin{equation}
 \label{CH}
  u_t + 2\kappa\,u_x -u_{txx} + 3\, u\,u_x  = 2\,u_x u_{xx} + u\,u_{xxx},
\end{equation}
for $x \in \R$, $t>0$ and $\kappa\in\R$, which was introduced in the context of water waves by Camassa and Holm \cite{Camassa1993}. For a classification of weak traveling wave solutions of the Camassa--Holm equation we refer to \cite{Len04}. Proceeding as in the previous section we introduce the traveling wave Ansatz  $u(x,t)=u(x-c\,t)$. Integrating once equation \eqref{CH} takes the form
\begin{equation*}
 u''(u-c) + \frac{(u')^2}{2} + r + (c-2\kappa)\,u - \frac{3}{2}u^2 = 0,
\end{equation*}
where $r$ is a constant of integration. The change of variables
\[
  w=u-c,
\]
transforms the above equation to the form \eqref{E_TWeq}  with
\begin{equation}
 \label{FCH}
  F(w)=  A\,w + B\,w^2 - \frac{1}{2}w^3,
\end{equation}
where $A=r-2\kappa c - \frac{1}{2}c^2$ and $B=-(c+\kappa)$. $F(w)$ is a third order polynomial which satisfies $F(0)=0$, it has at most three roots
\begin{equation*}
 w=0 \text{ and } w= B \pm \sqrt{B^2+2A},
\end{equation*}
and at most two extrema
\begin{equation*}
 p_i= \frac{2B +(-1)^ i\sqrt{4B^2+6A}}{3},\; i=1,2.
\end{equation*}
We may assume that $B\geq 0$ since otherwise the change of variables
$(\hat{w},\hat{v}) = -(w,v)$ yields this situation (this is
equivalent to considering system \eqref{E_TWSys} only for $u\geq
0$).
Note that $F$ does not have any extremum when  $4B^2 +6A\leq 0$, and it has two distinct extrema otherwise. In the latter case, $p_1$ is the local minimum and $p_2>0$ the local maximum (with
$F''(p_2)<0$).  We denote $h_1=F(p_1)$  and $h_2=F(p_2)$, and distinguish between the following cases:

\begin{center}
  \begin{tabular}{| c | l  l  l | l  l |}

    \hline
    Case & & & & &\\
    \hline
    $(i)$ & $A>0$ &                & & $p_1<0<p_2$ & $h_1<0<h_2$   \\ \hline
    $(ii)$ & $A=0$, & $B>0$ & &    $p_1=0<p_2$ & $h_1=0<h_2$  \\ \hline
    $(iii)$ & $A<0$, & $4B^2 +6A>0$, &  $B^2+2A>0$ &   $0<p_1<p_2$ & $h_1<0<h_2$ \\  \hline
    $(iv)$ & $A<0$, & $4B^2 +6A>0$, & $B^2+2A=0$ &   $0<p_1<p_2$ & $h_1<h_2=0$ \\ \hline
    $(v)$ & $A<0$, & $4B^2 +6A>0$, & $B^2+2A<0$ &   $0<p_1<p_2$ & $h_1<h_2<0$  \\ \hline
    $(vi)$ & $A<0$, & $4B^2 +6A\leq 0$ &  & &\\   \hline
  \end{tabular}
\end{center}
Taking into account the cases described above  and  the classification given in Propositions
\ref{P_peaked} and \ref{P_cusped} we obtain the following types of singular TWS
varying the energy level $h$:

\begin{center}
  \begin{tabular}{| l || c | c | c | c | }
    \hline
         Energy/Case                & $(i)$                   & $(ii)$                  &  $(iii)$                 &  $(iv)$  \\\hline\hline
     $h<h_1$      &                          &                         &                         &        \\\hline
     $h=h_1$      &                          &                         & constant             & constant  \\\hline
     $h_1<h<0$ &                          &                         & smooth periodic  & smooth periodic \\\hline
     $h=0$         &                          & constant             & peaked periodic  & peaked solitary \\\hline
     $0<h<h_2$ & cusped periodic   & cusped periodic & cusped periodic   &        \\   \hline
     $h=h_2$     & cusped solitary     & cusped solitary  & cusped solitary    &         \\   \hline
      $h>h_2$    &                          &                         &                          &        \\ \hline
  \end{tabular}\\\vspace{1.5em}
   \begin{tabular}{| l || c |}
    \hline
           Energy/Case                        &  $(v)$  \\ \hline\hline
     $h<h_1$              &  \\ \hline
     $h=h_1$              & constant\\  \hline
     $h_1<h<h_2<0$  &  smooth periodic   \\  \hline
      $h=h_2$             & smooth solitary \\ \hline
     $h>h_2$         &  \\   \hline
  \end{tabular}
\end{center}

\subsection{Generalized Camassa-Holm Equation}
\label{S_GCH}
In \cite{Liu2004} the authors study peaked solitary and periodic cusped traveling wave solutions of a generalization of the CH equation of the form
\begin{equation}
\label{E_GCH}
  u_t + 2\kappa\,u_x -u_{txx} + a\, u\,u_x  = 2\,u_x u_{xx} + u\,u_{xxx},
\end{equation}
where $a\in\R$ is an additional parameter. At the end of their paper they state the following conjecture: ``If the parameter $a\leq 0$, then equation \eqref{E_GCH} has no peaked solitary wave solution''. Using the approach developed in the preceding sections it is easy to see that this assertion is true. Proceeding as with the CH above we introduce the traveling wave Ansatz and integrate once to find that after the change of variables $w=u-c$ we obtain
\begin{equation*}
 w''w+ \frac{(w')^2}{2} +F'(w)=0,
\end{equation*}
which is an equation of the form \eqref{E_TWeq} with
\begin{equation*}
  F(w)=  A\,w + B\,w^2 - \frac{a}{6}w^3,
\end{equation*}
where $A=r+(1 - \frac{a}{2})c^2-2\kappa c$ and
$B=(1-a)\frac{c}{2}-\kappa$. Our analysis (cf.~Proposition
\ref{P_peaked}) shows that an equation of this form has peaked
solitary TWS if and only if $F'(0)<0$ and there exists $m>0$ such
that $F(m)=F(0)$ with $F(u)<F(0)$ for $u\in(0,m)$ and $F'(m)=0$.
Assuming that $a<0$, we see that $F(w)\rightarrow \pm \infty$ as
$w\rightarrow \pm \infty$. This contradicts the conditions for the
existence of peaked solitary solutions stated above. Indeed, if
equation \eqref{E_GCH} had peaked solitary TWS, then $F'(0)=A<0$.
Thus, $F$ would have a maximum to the left and a minimum to the
right of $w=0$. Hence there exists $m>0$ such that $F(m)=F(0)$, but
$F'(m)\neq 0$ since $F$ has at most two extrema. For $a=0$ the
situation is similar. This shows that the conjecture is true.


\end{document}